%
%
%
\pdfpagewidth=8.5truein
\pdfpageheight=11truein
%

\documentclass[11pt]{article}
\usepackage{amssymb,amsmath,amsthm,stmaryrd}
\usepackage{enumerate}
\usepackage{pgf,tikz}
\usetikzlibrary{arrows}
\usepackage[labelsep=space]{caption}
\usepackage{verbatim}

\setlength{\topmargin}{ -1.5cm}
\setlength{\oddsidemargin}{ -0.5cm}
\textwidth 17cm
\textheight 22.4cm

\newtheorem{theorem}{Theorem}[section]
\newtheorem{corollary}[theorem]{Corollary}

\newtheorem{proposition}[theorem]{Proposition}

\newtheorem{definition}[theorem]{Definition}
\newtheorem{example}[theorem]{Example}
\numberwithin{equation}{section}

\newcommand{\R}{{\bf{R}}}
\newcommand{\U}{{\bf{U}}}
\newcommand{\D}{{\bf{D}}}
\newcommand{\RR}{{\bf{\overline{R}}}}
\newcommand{\UU}{{\bf{\overline{U}}}}
\newcommand{\bc}{{\bf{c}}}
\newcommand{\bd}{{\bf{d}}}
\newcommand{\ba}{{\bf{a}}}
\newcommand{\bb}{{\bf{b}}}
\newcommand{\wt}{\operatorname{wt}}
\newcommand{\pyr}{\operatorname{Pyr}}
\newcommand{\prism}{\operatorname{Prism}}
\newcommand{\zab}{\mathbb{Z}\langle\ba,\bb\rangle}
\newcommand{\zcd}{\mathbb{Z}\langle\bc,\bd\rangle}
\newcommand\Tstrut{\rule{0pt}{2.6ex}}

\newcommand{\vanish}[1]{}

\begin{document}

\author{N. Bradley Fox}
\title{A Lattice Path Interpretation of the Diamond Product}
\date{}

\maketitle

\begin{abstract}
The diamond product is the poset operation that when applied to the face lattices of two polytopes results in the face lattice of the Cartesian product of the polytopes. Application of the diamond product to two Eulerian posets is a bilinear operation on the $\bc\bd$-indices of the two posets, yielding a product on $\bc\bd$-polynomials.   A lattice path interpretation is provided for this product of two $\bc\bd$-monomials.
\end{abstract}

\section{Introduction}
The $\bc\bd$-index is a polynomial in the non-commutative variables $\bc$ and $\bd$ that efficiently encodes an Eulerian poset's flag $f$-vector, which contains information on the number of chains through each set of ranks of the poset.  One primary example of an Eulerian poset is the face lattice of a convex polytope.  An important characteristic of the $\bc\bd$-index is that it is a useful invariant for computations, as explicit formulas have been developed to calculate the effect that poset and polytope operations have on the $\bc\bd$-index.  Polytope operations that have been studied include the prism, pyramid, free join, Cartesian product, and truncation of a vertex, as well as their associated poset operations.

Ehrenborg and Readdy used coproducts and derivations in \cite{Ehrenborg_Readdy} to generate expressions for the $\bc\bd$-index of polytopes under operations such as the prism of a polytope, or more generally the Cartesian product of polytopes.  The equivalent poset operation to this product is the diamond product.  The expressions that were developed, unfortunately, were rather complicated and required the use of auxiliary variables $\ba$ and $\bb$. Ehrenborg and Fox \cite{Ehrenborg_Fox} (no relation to author) improved upon the earlier work by developing recursive formulas for the bilinear operator that corresponds to the diamond product.  

The diamond product operator is non-negative on $\bc\bd$-indices, thus leading to the study of combinatorial interpretations of the resulting coefficients.  Slone \cite{Slone} examined the specific case of the diamond product of two butterfly posets, whose $\bc\bd$-indices are simply powers of $\bc$.  He found that one can interpret the polynomial as a weighted sum of lattice paths.  In this paper, a generalization of Slone's lattice path interpretation is given for the diamond product of any two $\bc\bd$-monomials, in addition to a lattice path interpretation for the product of $\ba\bb$-monomials.

In Section 2 we introduce the $\bc\bd$-index of Eulerian Posets and its underlying coalgebra structure.  In Section~3 we introduce the diamond product of two posets.  The lattice path interpretation of this product is discussed in Section 4.  Finally, we state an open problem in Section 5.

\section{Posets, the $\bc\bd$-index, and coproducts}
Consider the poset $P$ to be a graded poset of rank $n+1$ with rank function $\rho$, unique minimum element~$\hat{0}$, and unique maximum element $\hat{1}$.  For further terminology on partially ordered sets, see \cite[Chapter~3]{Stanley}.  

A $\it{chain}$ $c$ in such a poset $P$ is a linearly ordered subset of $P$.  We will only consider chains that contain the minimum element $\hat{0}$ and the maximum element $\hat{1}$; hence, we write $c$ as $c=\{\hat{0}=x_0<x_1<\cdots <x_k=\hat{1}\}$.  Let $S$ be a subset of the set $\{1,2,\ldots,n\}$, and define $f_S(P)=f_S$ to be the number of chains in the poset $P$ whose elements $x_1,\ldots, x_{k-1}$ have ranks that are exactly the elements of the set~$S$.  The $2^n$ values of $f_S$ are collectively known as the $flag$ $f$-$vector$ of $P$. The $flag$ $h$-$vector$ is defined using the relation 
$$h_S=\sum_{T\subseteq S} (-1)^{|S-T|}\cdot f_T,$$
which is equivalent to $$f_S=\sum_{T\subseteq S} h_T.$$

Let $\ba$ and $\bb$ be non-commutative variables.  For a subset $S$ of $\{1,\ldots,n\}$, define the $\ba\bb$-monomial $u_S=u_1\cdots u_n$ in which $u_i=\ba$ if $i\notin S$ and $u_i=\bb$ if $i\in S$.  Define the $\ba\bb$-$index$ $\Psi(P)$ of the poset $P$ to be the $\ba\bb$-polynomial 
$$\Psi(P)=\sum_S h_S\cdot u_S,$$
where $S$ ranges over all subsets of $\{1,\ldots, n\}$.

The M\"{o}bius function $\mu$ of a poset $P$ is defined by $\mu(x,x)=1$ and the recursion  $\mu(x,y)=-\sum_{x\leq z<y} \mu(x,z)$ for $x<y$.  A poset $P$ is \textit{Eulerian} if its M\"{o}bius function satisfies the relation $\mu(x,y)=(-1)^{\rho(y)-\rho(x)}$ for all intervals $[x,y]$ in $P$.  A key example of Eulerian posets is the face lattice of a convex polytope.  The following result was conjectured by Fine and later proved by Bayer and Klapper \cite{Bayer_Klapper}.

\begin{theorem}[Bayer--Klapper]
\label{cdindex}
The $\ba\bb$-index $\Psi(P)$ of an Eulerian poset $P$ is a non-commutative polynomial in $\bc=\ba+\bb$ and $\bd=\ba\cdot \bb+\bb\cdot \ba$.
\end{theorem}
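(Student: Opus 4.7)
The plan is to deduce the theorem from the Bayer--Billera relations, the linear identities on the flag $h$-vector that characterize Eulerian posets. First I would exploit the Eulerian hypothesis $\mu(x,y)=(-1)^{\rho(y)-\rho(x)}$ on every interval of rank difference at least two. Fixing a partial chain through prescribed ranks outside a gap $\{i+1,\ldots,j-1\}$ of missing ranks and applying the M\"obius identity to the remaining factor yields, after converting from $f_S$ to $h_S$ via inclusion-exclusion, the Bayer--Billera relations: for each $S\subseteq\{1,\ldots,n\}$ and each maximal gap $(i,j)$ with $i,j\in S\cup\{0,n+1\}$, an alternating sum of the form $\sum_{k=i+1}^{j-1}(-1)^{k-i-1}h_{S\cup\{k\}}$ vanishes.

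Next I would show that these relations precisely characterize membership in the $\bc\bd$-subalgebra of $\zab$. For one direction, I would expand an arbitrary $\bc\bd$-monomial in the $u_S$ basis using $\bc=\ba+\bb$ and $\bd=\ba\bb+\bb\ba$ and check that every coefficient vector so obtained satisfies the alternating-sum identities. The reverse inclusion follows by a dimension count: the number of $\bc\bd$-monomials of total $\ba\bb$-degree $n$ is the Fibonacci number $F_{n+1}$, and the rank of the Bayer--Billera relation matrix leaves a solution space of exactly the same dimension. An alternative route uses the Newtonian coalgebra structure: equip $\zab$ with the coproduct $\Delta(u_1\cdots u_n)=\sum_{i} u_1\cdots u_{i-1}\otimes u_{i+1}\cdots u_n$, verify the factorization
$$\Delta\Psi(P)=\sum_{\hat{0}<x<\hat{1}}\Psi([\hat{0},x])\otimes \Psi([x,\hat{1}]),$$
and induct on $\rho(P)$, since each proper subinterval of an Eulerian poset is again Eulerian.

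The main obstacle in either approach is the final step: showing that these relations, or equivalently the preimage of the $\bc\bd$ tensor square under $\Delta$, cut out exactly the $\bc\bd$-subalgebra rather than a strictly larger subspace. This reduces to a combinatorial basis argument matching the $\bc\bd$-monomials of degree $n$ with the Fibonacci-many ``sparse'' subsets $S\subseteq\{1,\ldots,n\}$ (those containing no two consecutive integers, indexing a distinguished set of $u_S$ from which the $\bc\bd$-coefficients can be read off), together with verifying independence of the Bayer--Billera relations. Once this dimension and basis calculation is in place, Theorem~\ref{cdindex} follows by applying the characterization to $\Psi(P)$, since $\Psi(P)$ satisfies the relations by the first step.
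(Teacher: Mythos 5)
The paper does not prove this statement: it is quoted as a known foundational result (conjectured by Fine, proved by Bayer and Klapper), and the proof lives entirely in the cited reference. So there is no internal argument of the paper to compare yours against; what can be assessed is whether your outline would stand on its own, and as written it would not.

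Your primary route (Eulerian condition $\Rightarrow$ generalized Dehn--Sommerville relations $\Rightarrow$ membership in $\zcd$) is indeed the standard Bayer--Klapper strategy, and the first step is routine. But the proposal stops exactly where the theorem actually lives, and you say so yourself: the ``main obstacle'' of showing that the relations cut out a space of dimension exactly $F_{n+1}$, i.e.\ establishing independence of the relations and exhibiting a matching basis, is named but not carried out. That is the content of the theorem, not a verification one can wave at. There is also an off-by-one in the proposed basis: sparse subsets of $\{1,\ldots,n\}$ number $F_{n+2}$, whereas $\bc\bd$-monomials of degree $n$ number $F_{n+1}$; the correct matching is with sparse subsets of $\{1,\ldots,n-1\}$ (record, say, the position where each $\bd$ begins). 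More seriously, the alternative coalgebra route fails as stated: the preimage of $\zcd\otimes\zcd$ under $\Delta$ is strictly larger than $\zcd$, because $\Delta$ has a nontrivial kernel in each positive degree and that kernel is not contained in $\zcd$ in odd degrees --- already $\Delta(\ba-\bb)=0$ while $\ba-\bb\notin\zcd$, and likewise $(\ba-\bb)^3\notin\zcd$. An induction on rank using $\Delta\Psi(P)=\sum_{\hat{0}<x<\hat{1}}\Psi([\hat{0},x])\otimes\Psi([x,\hat{1}])$ therefore only places $\Psi(P)$ in $\zcd$ plus a kernel contribution, and closing that gap requires an additional linear condition on $\Psi(P)$ (coming from the Euler relations) that the proposal does not identify or use. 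In short: the skeleton points in the right direction, but the load-bearing step is absent from both of your suggested routes.
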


When written in terms of $\bc$ and $\bd$, we call $\Psi(P)$ the $\bc\bd$-index of the poset $P$, although the same notation is used for the $\ba\bb$-index and the $\bc\bd$-index.  The existence of the $\bc\bd$-index is equivalent to the fact that the flag $f$-vector of an Eulerian poset satisfies the generalized Dehn--Sommerville relations, due to Bayer and Billera in \cite{Bayer_Billera}.   For examples and more information on the $\bc\bd$-index of posets, see \cite[Section~3.17]{Stanley}.

We now briefly discuss the coalgebraic structures of the $\ba\bb$-index and the $\bc\bd$-index that were introduced in~\cite{Ehrenborg_Readdy}.   Given an abelian group $V$, a \textit{coproduct} is a linear map $\Delta: V\longrightarrow V\otimes V$.  We will use Sweedler notation to denote the coproduct of an element $v\in V$ as $\Delta(v)=\sum_v v_{(1)}\otimes v_{(2)}$, where this sum is over finitely many pairs $v_{(1)}$ and $v_{(2)}$.  An abelian group $V$ with associative product $\cdot$ and coassociative coproduct $\Delta$ is called a \textit{Newtonian coalgebra} if it satisfies the following identity
$$\Delta(u\cdot v)=\sum_u u_{(1)} \otimes u_{(2)} \cdot v+\sum_v u\cdot v_{(1)} \otimes v_{(2)}.$$
It is straightforward to verify that the two coalgebras described below are both Newtonian.

First, let $\zab$ denote the polynomial ring in the non-commutative variables $\ba$ and $\bb$, where the degree of each variable is one.  For an $\ba\bb$-monomial $u=u_1\cdots u_n$, define
$$\Delta(u)=\sum_{i=1}^n u_1\cdots u_{i-1} \otimes u_{i+1}\cdots u_n,$$
and extend linearly to $\zab$.  As examples, $\Delta(1)=0$ and $\Delta(\ba)=\Delta(\bb)=1\otimes 1$.

Next, consider the subring $\zcd$ of $\zab$ generated by the variables $\bc$ and $\bd$ as defined in Theorem~\ref{cdindex}.  Once one calculates $\Delta(\bc)=\Delta(\ba+\bb)=2\cdot 1\otimes 1$ and $\Delta(\bd)=\Delta(\ba\cdot \bb+\bb\cdot \ba)=1\otimes \bc+\bc \otimes 1$, it can be verified that $\zcd$ is also a Newtonian coalgebra.

We now define two linear operators on these coalgebras.  Let $G:\zab \longrightarrow \zab$ be the derivation given by the rules $G(\ba)=\bb\cdot\ba$, $G(\bb)=\ba\cdot\bb$, and the product rule $G(u\cdot v)=G(u)\cdot~v+~u\cdot~G(v)$.  Since $G(\bc)=\bd$ and $G(\bd)=\bc\cdot\bd$, $G$ becomes a linear operator on $\zcd$ as well.  Then let $\pyr:~\zcd \longrightarrow \zcd$ be the linear operator defined by $\pyr(u)=u\cdot \bc+G(u)$. 

\section{The Diamond Product of Posets}
Given two graded posets $P$ and $Q$, we define the \textit{Cartesian product} of $P$ and $Q$ to be the poset $P\times Q=\{(x,y):x\in P,y\in Q\}$ with the order relation given by $(x,y)\leq_{P\times Q} (w,z)$ if $x\leq_P w$ and $y\leq_Q z$.  Using this product, we can then define the \textit{diamond product} of $P$ and $Q$ as the graded poset $P\diamond Q=(P-\{\hat{0}\})\times(Q-\{\hat{0}\})\cup \{\hat{0}\}$.  This product corresponds to the \textit{Cartesian product} of polytopes, defined as follows.  For an $m$-dimensional polytope $V$ and $n$-dimensional polytope $W$, we say the Cartesian product of $V$ and $W$ is the $(m+n)$-dimensional polytope $$V\times W=\{(x_1,\ldots, x_{m+n})\in \mathbb{R}^{m+n}\text{ : } (x_1,\ldots, x_m)\in V, (x_{m+1},\ldots, x_{m+n})\in W\}.$$
The connection between the diamond product and Cartesian product was noted by Kalai in \cite{Kalai}, where he stated that the face lattice of the Cartesian product of two polytopes corresponds to the diamond product of their face lattices, that is $\mathcal{L}(V\times W)=\mathcal{L}(V)\diamond \mathcal{L}(W).$  The diamond product specifically appears when studying the \textit{prism} of a polytope, defined as $\prism(V)=V\times I$, where $I$ is the unit interval.  As stated in Proposition 4.1 of \cite{Ehrenborg_Readdy}, $\mathcal{L}(\prism(V))=\mathcal{L}(V)\diamond B_2$.

Because of the importance of the prism operation and the Cartesian product in the study of polytopes, one needs to understand how these operations affect the $\ba\bb$- or $\bc\bd$-index of polytopes, or likewise their associated posets.  This leads to the investigation of the $\bc\bd$-index of the diamond product of two Eulerian posets.  Ehrenborg and Readdy \cite{Ehrenborg_Readdy} developed a bilinear operator for this purpose, as described in the following proposition.  One can find the precise definition along with additional properties and recurrences for this operator in Section 6 of \cite{Ehrenborg_Fox} and Section 10 of \cite{Ehrenborg_Readdy}.

\begin{proposition}[Ehrenborg--Readdy]
\label{diamond_prop}
There exists a bilinear operator from $\zcd\times \zcd$ to $\zcd$, denoted by $\diamond$, such that given any two Eulerian posets $P$ and $Q$, the $\bc\bd$-index of their diamond product is given by
$$\Psi(P\diamond Q)=\Psi(P)\diamond \Psi(Q).$$\\*
Hence for two polytopes $V$ and $W$, the $\bc\bd$-index of the Cartesian product $V\times W$ is given by 
$$\Psi(V\times W)=\Psi(V)\diamond \Psi(W).$$
\end{proposition}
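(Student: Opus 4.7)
The plan is to construct $\diamond$ first on the larger ring $\zab$ by directly analyzing chains in $P \diamond Q$, and then verify that the resulting bilinear map restricts to $\zcd\times \zcd \to \zcd$. The advantage of working first in $\zab$ is that the flag $f$-vector of a product-type poset admits an immediate combinatorial description, while the restriction to the $\bc\bd$ setting can be handled afterwards via Theorem~\ref{cdindex}.

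For the combinatorial step, I would examine an arbitrary chain $\hat 0 < (x_1,y_1) < \cdots < (x_k,y_k) = (\hat 1,\hat 1)$ in $P\diamond Q$. Projection onto each factor yields a weakly increasing chain in $P$ and a weakly increasing chain in $Q$, and since $(x_{i-1}, y_{i-1}) < (x_i, y_i)$ in the Cartesian product forces at least one coordinate to strictly increase, each step is labeled by one of three types: the $P$-coordinate jumps, the $Q$-coordinate jumps, or both jump. Using the rank identity $\rho_{P\diamond Q}(x,y) = \rho_P(x) + \rho_Q(y) - 1$ for $(x,y)\ne \hat 0$, each rank set $S$ of a chain in $P\diamond Q$ decomposes in a prescribed number of ways into a pair $(T,U)$ of rank subsets in the factors together with a sequence of step-labels. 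Summing over these decompositions expresses $f_S(P\diamond Q)$ as an explicit bilinear function of the $f_T(P)$ and $f_U(Q)$, which after Möbius inversion and re-indexing by $\ba\bb$-monomials defines a bilinear operator $\diamond:\zab\times\zab\to\zab$ satisfying $\Psi(P\diamond Q) = \Psi(P)\diamond\Psi(Q)$ for all graded $P,Q$.

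To restrict to $\zcd$, assume $P$ and $Q$ are Eulerian. A standard computation with the Möbius function of a product shows that $P\diamond Q$ is then also Eulerian, so Theorem~\ref{cdindex} forces $\Psi(P\diamond Q)\in\zcd$. To conclude that the restricted bilinear operator lands in $\zcd$ on \emph{all} pairs in $\zcd\times\zcd$, I would invoke the fact that the $\bc\bd$-indices of Eulerian posets span $\zcd$; this follows by a triangularity argument, since iterated applications of $\pyr$ (together with taking products with the Boolean lattice $B_2$) produce Eulerian $\bc\bd$-indices hitting every $\bc\bd$-monomial as a leading term in the degree-lexicographic order on the number of $\bd$'s. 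Bilinearity then transfers the $\zcd$-valued property from this spanning family to all of $\zcd\times\zcd$. The polytope consequence is immediate from Kalai's identity $\mathcal{L}(V\times W) = \mathcal{L}(V)\diamond\mathcal{L}(W)$ together with $\Psi(V) = \Psi(\mathcal{L}(V))$.

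The main obstacle is the \textbf{both-jump} step-type in the chain analysis: unlike the other two types, this case genuinely couples the two factors and obstructs a clean tensor-product factorization of the operator. The rank contribution of such a joint step depends on which ranks are chosen in the two factors rather than splitting additively along separate rank sets, which is what forces the bookkeeping to be carried out at the $\ba\bb$-level first and then projected to $\zcd$. This coupling is precisely what makes the explicit formulas of Ehrenborg--Readdy complicated and justifies retaining the auxiliary variables $\ba,\bb$ rather than attempting a purely $\bc\bd$-expression from the outset.
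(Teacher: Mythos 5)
The paper does not prove this proposition; it is imported verbatim from Ehrenborg and Readdy, with a pointer to Section~10 of \cite{Ehrenborg_Readdy} for the construction, so there is no in-paper argument to measure yours against. Judged on its own, your outline follows the same underlying idea as the cited source: count chains of $P\diamond Q$ by projecting to the two factors and recording, at each step, which coordinate strictly increases. This does express $f_S(P\diamond Q)$ as a universal bilinear form in the flag $f$-vectors of the factors, and after passing to flag $h$-vectors it defines a bilinear operator on $\zab$ with $\Psi(P\diamond Q)=\Psi(P)\diamond\Psi(Q)$. Your closing remark about the ``both-jump'' steps is exactly right, and it is worth noting that these are precisely the source of the coproduct terms $(u_{(1)}\diamond v)\cdot\ba\cdot\bb\cdot u_{(2)}$ in Proposition~\ref{diamond_recursion_ab}.

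Three places in your sketch carry real weight and should not be waved at. First, Eulerianness of $P\diamond Q$ is not literally ``the M\"obius function of a product'': intervals $[(x,y),(x',y')]$ with $(x,y)\neq\hat0$ are products of intervals, but $[\hat0,(x,y)]$ is not, and you must compute $\mu(\hat0,(x,y))=(-1)^{\rho(x)+\rho(y)-1}$ separately, using $\sum_{\hat0<x'\leq x}(-1)^{\rho(x')}=-1$ in each Eulerian factor. Second, the claim that $\bc\bd$-indices of Eulerian posets span the degree-$n$ part of $\zcd\otimes\mathbb{Q}$ is the Bayer--Billera completeness theorem for the generalized Dehn--Sommerville relations; it is true and citable, but your one-line triangularity sketch hides a genuine argument, and since your spanning family is built with prisms you must order the proof so that the $\zab$-level operator is already available to compute $\Psi(\prism(V))=\Psi(V)\diamond\bc$ and its leading term --- otherwise the construction is circular. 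Third, spanning over $\mathbb{Q}$ only puts $u\diamond v$ in $\zcd\otimes\mathbb{Q}$; to land in $\zcd$ you need to combine this with the integrality of the $\zab$-level operator and the fact that $\zcd$ is saturated in $\zab$ (the $\bc\bd$-coefficients of an $\ba\bb$-polynomial are integer linear functions of the flag $h$-vector). None of these is a fatal flaw, but each is a step where the actual content of the proposition lives.
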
 

The bilinear operator described in Proposition~\ref{diamond_prop} is denoted as $N(u,v)$ in the papers \cite{Ehrenborg_Fox} and \cite{Ehrenborg_Readdy}, but we use the diamond product $u\diamond v$ to simplify the notation.  Also note that the diamond product operator can be extended to be a product of $\ba\bb$-polynomials instead of only $\bc\bd$-polynomials.

The following statements made by Ehrenborg and Fox in \cite{Ehrenborg_Fox} give useful properties and a recursive formula for calculating the diamond product of two $\ba\bb$- or $\bc\bd$-polynomials.  Proposition~\ref{diamond_recursion_ab} is a reformulated version of Proposition~7.6 of \cite{Ehrenborg_Fox}.  Likewise, Proposition~\ref{diamond_recursion} is a reformulation of Theorem~7.1 of \cite{Ehrenborg_Fox},  as was shown in Corollary~2.3.7 of \cite{Slone}.

\begin{corollary}
For any $\ba\bb$- or $\bc\bd$-polynomials $u$, $v$, and $w$, the following are satisfied 
\begin{align*}
u\diamond 1&= u,\\
u\diamond v&= v\diamond u,\\
u\diamond (v\diamond w)&=(u\diamond v)\diamond w.
\end{align*}
\end{corollary}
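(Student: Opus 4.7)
The plan is to derive each identity from the corresponding fact at the poset level via Proposition~\ref{diamond_prop}, then extend by bilinearity to arbitrary polynomials. For $u \diamond 1 = u$, take the two-element chain $B_1 = \{\hat{0} < \hat{1}\}$, which is Eulerian with $\Psi(B_1) = 1$ (the empty $\ba\bb$-monomial). Directly from the definition of the poset diamond product, $B_1 \diamond P = \{(\hat{1}, x) : x \in P \setminus \{\hat{0}\}\} \cup \{\hat{0}\}$ is order-isomorphic to $P$ via $(\hat{1}, x) \mapsto x$, so Proposition~\ref{diamond_prop} yields $1 \diamond \Psi(P) = \Psi(P)$. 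For commutativity, the coordinate-swap map is an order isomorphism $P \diamond Q \cong Q \diamond P$; for associativity, the standard associativity of the Cartesian product lifts to $(P \diamond Q) \diamond R \cong P \diamond (Q \diamond R)$, since both sides equal $(P \setminus \{\hat{0}\}) \times (Q \setminus \{\hat{0}\}) \times (R \setminus \{\hat{0}\})$ with a single minimum attached. Each isomorphism passes through $\Psi$ and Proposition~\ref{diamond_prop} to give the corresponding algebraic identity on $\bc\bd$-indices of Eulerian posets.

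To upgrade from $\bc\bd$-indices of Eulerian posets to arbitrary elements of $\zcd$, I would use that these indices span $\zcd$ over $\mathbb{Q}$ in each graded component (which can be established by exhibiting enough Eulerian examples, such as suitable Boolean lattices, to match the Fibonacci-dimension count in each degree), combined with bilinearity of $\diamond$. For the $\ba\bb$-polynomial statement, I would use the analogous fact that $\ba\bb$-indices of (not necessarily Eulerian) graded posets span $\zab$, together with the observation that the three poset isomorphisms above do not require the Eulerian hypothesis.

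The main obstacle I expect is justifying these spanning arguments cleanly, especially for $\ba\bb$-polynomials. A cleaner alternative that sidesteps this issue is to prove all three identities by induction on total degree using a recursive formula for $\diamond$ from the references cited in the preceding text: commutativity is immediate from the symmetry of the recursion, $u \diamond 1 = u$ appears as a base case, and associativity is a routine if somewhat tedious induction.
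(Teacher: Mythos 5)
The paper offers no proof of this corollary; it is quoted from \cite{Ehrenborg_Fox} (see also \cite{Ehrenborg_Readdy}), so there is no internal argument to compare against, and your proposal must be judged on its own. Your main route --- establish $B_1\diamond P\cong P$, $P\diamond Q\cong Q\diamond P$, and $(P\diamond Q)\diamond R\cong P\diamond(Q\diamond R)$ at the poset level, push through Proposition~\ref{diamond_prop}, and then extend by (bi/tri)linearity from a spanning set --- is sound for the $\bc\bd$ statement. The three poset isomorphisms are correct exactly as you describe them, and the spanning fact you need is a genuine theorem of Bayer and Billera \cite{Bayer_Billera}: the flag $f$-vectors of polytopes affinely span the generalized Dehn--Sommerville subspace, so the $\bc\bd$-indices of Eulerian posets of rank $n+1$ linearly span the degree-$n$ component of $\zcd\otimes\mathbb{Q}$ (the affine span sits in the hyperplane where the coefficient of $\bc^n$ is $1$, which misses the origin, so the linear span is everything); torsion-freeness of $\zcd$ then carries the identities back from $\mathbb{Q}$-coefficients to $\mathbb{Z}$. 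This is a legitimately different and rather attractive argument: it trades algebraic manipulation of the recursions for geometry plus a nontrivial spanning theorem.

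Two caveats. First, for the $\ba\bb$-polynomial version you need the $\ba\bb$-analogue of Proposition~\ref{diamond_prop} for \emph{arbitrary} graded posets, i.e.\ $\Psi(P\diamond Q)=\Psi(P)\diamond\Psi(Q)$ as $\ba\bb$-polynomials without the Eulerian hypothesis; this is how the operator is actually defined in \cite{Ehrenborg_Readdy}, but it is not stated in the present paper, so you are importing an extra ingredient there, along with the (true but not free) fact that $\ba\bb$-indices of graded posets span $\zab$ in each degree. Second, your fallback claim that commutativity is ``immediate from the symmetry of the recursion'' is not right: the recursions~\eqref{diamond1ab}--\eqref{diamond2} peel letters only off the \emph{second} argument, so there is no visible symmetry, and proving $u\diamond v=v\diamond u$ from them requires a genuine double induction using the Newtonian coalgebra identity. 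Likewise $u\diamond 1=u$ cannot be \emph{derived} from the recursions --- it is the initial condition that, together with them, determines the operator --- so in the inductive approach it must be taken from the definition of the operator in \cite{Ehrenborg_Readdy}, not proved. With those points acknowledged, the primary argument goes through.
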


\begin{proposition}[Ehrenborg--Fox]
\label{diamond_recursion_ab}
For any $\ba\bb$-polynomials $u$ and $v$, the diamond product satisfies the following recursions:
\begin{align}
u\diamond (v\cdot \ba) &=(u\diamond v)\cdot \ba + \sum_u (u_{(1)} \diamond v)\cdot \ba\cdot\bb \cdot u_{(2)},\label{diamond1ab}\\
u\diamond (v\cdot \bb) &= (u\diamond v)\cdot \bb + \sum_u (u_{(1)} \diamond v)\cdot \bb\cdot\ba \cdot u_{(2)}\label{diamond2ab}.
\end{align}
\end{proposition}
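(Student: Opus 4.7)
My plan is to establish the two recursions by unwinding the original definition of $\diamond$ from \cite{Ehrenborg_Readdy, Ehrenborg_Fox} and exploiting the Newtonian coalgebra structure of $\zab$. By bilinearity it suffices to verify the identities when $u$ and $v$ are $\ba\bb$-monomials.

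The key algebraic input is the Newtonian identity applied to $v$ and $\ba$,
$$\Delta(v \cdot \ba) \;=\; \sum_v v_{(1)} \otimes v_{(2)} \cdot \ba \;+\; v \otimes 1,$$
which follows from $\Delta(\ba) = 1 \otimes 1$. This cleanly separates the coproduct of $v \cdot \ba$ into a bulk part indexed by $\Delta(v)$ and a boundary term $v \otimes 1$ recording the appended letter. I would substitute this decomposition into the underlying explicit formula for $u \diamond (v \cdot \ba)$ drawn from \cite{Ehrenborg_Readdy}, which is itself built out of $\Delta$ and the derivation $G$. The boundary piece $v \otimes 1$ is exactly what produces the leading term $(u \diamond v) \cdot \ba$ on the right-hand side of (\ref{diamond1ab}), while the bulk sum over $\Delta(v)$ combines with the coproduct of $u$ to produce $\sum_u (u_{(1)} \diamond v) \cdot \ba\bb \cdot u_{(2)}$. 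The proof of (\ref{diamond2ab}) is then identical up to interchanging $\ba$ and $\bb$ throughout.

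An alternative route, if one prefers to remain inside the given recursive framework, is induction on $\deg v$. The base case $v = 1$ reduces (\ref{diamond1ab}) to the closed form $u \diamond \ba = u \cdot \ba + \sum_u u_{(1)} \cdot \ba\bb \cdot u_{(2)}$, which must be checked directly from the definition of $\diamond$ (or from the poset interpretation $\Psi(P \diamond B_2)$). The inductive step writes $v = v' \cdot x$ with $x \in \{\ba,\bb\}$, applies the hypothesis to $v' \cdot x$, and then invokes the Newtonian identity above to reassemble the Sweedler terms indexed by $\Delta(v' \cdot x)$ into terms indexed by $\Delta(v)$.

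The main obstacle in either approach is the Sweedler-notation bookkeeping: because the underlying formula from \cite{Ehrenborg_Readdy} involves nested coproduct sums, one must simultaneously track the terms arising from the coproducts of both $u$ and $v \cdot \ba$ and verify that they regroup exactly as claimed. This matching is essentially the content of Proposition~7.6 in \cite{Ehrenborg_Fox}; the merit of the reformulation above is that it expresses the recursion purely in terms of the operator $\diamond$ and the coproduct $\Delta$ on $\zab$, without reference to any auxiliary linear operators.
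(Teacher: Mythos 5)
The first thing to say is that the paper itself contains no proof of this proposition: it is quoted as a reformulation of Proposition~7.6 of Ehrenborg--Fox, and the operator $\diamond$ (the operator $N$ of that paper) is never defined here --- the reader is referred to Section~6 of \cite{Ehrenborg_Fox} and Section~10 of \cite{Ehrenborg_Readdy} for its definition. So there is no in-paper argument to measure your proposal against; the question is whether your sketch would stand on its own as a proof.

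As written, it would not, and the gap is concrete: you never state the definition of $\diamond$, and every step carrying actual content is deferred to that missing definition. Your Newtonian identity $\Delta(v\cdot\ba)=\sum_v v_{(1)}\otimes v_{(2)}\cdot\ba + v\otimes 1$ is correct (it follows from $\Delta(\ba)=1\otimes 1$), but the instruction to ``substitute this decomposition into the underlying explicit formula for $u\diamond(v\cdot\ba)$'' cannot be carried out or checked without that formula, and you concede the point when you write that the resulting matching ``is essentially the content of Proposition~7.6'' --- that is, the statement being proved. The alternative induction on $\deg v$ has the same defect and an additional one: the base case $u\diamond\ba=u\cdot\ba+\sum_u u_{(1)}\cdot\ba\bb\cdot u_{(2)}$ is itself an instance of the identity to be proved and is left ``to be checked directly from the definition'' that is never supplied; and the inductive step is not well-founded as described, since the recursion only peels off the final letter of $v$, so knowing the identity for shorter words gives no handle on $u\diamond(v\cdot\ba)$ unless one already has an independent characterization of $\diamond$ to anchor the induction. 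In short, you have correctly identified the relevant structures (bilinearity, the Newtonian coalgebra axiom, $\Delta(\ba)=\Delta(\bb)=1\otimes 1$, and the symmetry between the two equations under swapping $\ba$ and $\bb$), but the proposal is a plan whose central verification is precisely the cited result, not a proof of it.
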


\begin{proposition}[Ehrenborg--Fox]
\label{diamond_recursion}
For any $\bc\bd$-polynomials $u$ and $v$, the diamond product satisfies the following recursions:
\begin{align}
u\diamond (v\cdot \bc) &=(u\diamond v)\cdot \bc + \sum_u (u_{(1)} \diamond v)\cdot \bd \cdot u_{(2)},\label{diamond1}\\
u\diamond (v\cdot \bd) &= (u\diamond v)\cdot \bd + \sum_u (u_{(1)} \diamond v)\cdot \bd \cdot \pyr(u_{(2)})\label{diamond2}.
\end{align}
\end{proposition}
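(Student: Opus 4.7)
The plan is to derive both recursions from the $\ba\bb$-recursions of Proposition~\ref{diamond_recursion_ab} via the identifications $\bc = \ba + \bb$ and $\bd = \ba\bb + \bb\ba$, using the fact that the coproduct on $\zcd$ coincides with the restriction of the $\ba\bb$-coproduct from $\zab$. Equation~(\ref{diamond1}) will fall out directly: bilinearity gives $u \diamond (v\bc) = u \diamond (v\ba) + u \diamond (v\bb)$, and summing (\ref{diamond1ab}) with (\ref{diamond2ab}) collapses the answer to $(u \diamond v)(\ba + \bb) + \sum_u (u_{(1)}\diamond v)(\ba\bb + \bb\ba) u_{(2)} = (u\diamond v)\bc + \sum_u (u_{(1)}\diamond v)\bd\, u_{(2)}$.

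For equation~(\ref{diamond2}) I will expand $u \diamond (v\bd) = u \diamond (v\ba\bb) + u \diamond (v\bb\ba)$ by applying (\ref{diamond1ab}) and (\ref{diamond2ab}) twice in succession to each summand. The eight resulting terms organize as: (i)~$(u\diamond v)\bd$; (ii)~single-coproduct pieces $\sum_u (u_{(1)}\diamond v)\bigl[\ba\bb\, u_{(2)}\, \bb + \bb\ba\, u_{(2)}\, \ba\bigr]$; (iii)~``boundary'' pieces $\sum_u (u_{(1)}\diamond v)\bigl[\ba\bb\ba + \bb\ba\bb\bigr]u_{(2)}$; and (iv)~double-coproduct pieces $\sum_u \sum_{u_{(1)}} (u_{(1,1)}\diamond v)\bigl[\ba\bb\, u_{(1,2)}\, \bb\ba + \bb\ba\, u_{(1,2)}\, \ba\bb\bigr] u_{(2)}$. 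Coassociativity of $\Delta$ lets me rewrite the double sum as an outer sum over $\Delta(u)$ with an inner coproduct on $u_{(2)}$, setting up a direct comparison with the target expression.

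The main obstacle will be proving the following identity for every $\ba\bb$-monomial $w$:
\begin{equation*}
\bigl[\ba\bb\ba + \bb\ba\bb\bigr] w + \sum_w \bigl[\ba\bb\, w_{(1)}\, \bb\ba + \bb\ba\, w_{(1)}\, \ba\bb\bigr] w_{(2)} = \ba\bb\, w\, \ba + \bb\ba\, w\, \bb + \bd\cdot G(w).
\end{equation*}
I will establish this by induction on the length of $w$, using $\Delta(wx) = \sum_w w_{(1)}\otimes w_{(2)}x + w\otimes 1$ and $G(wx) = G(w)x + w\,G(x)$ for $x\in\{\ba,\bb\}$; in both cases $x=\ba$ and $x=\bb$, the inductive step reduces after cancellation to a short word-level identity using $G(\ba)=\bb\ba$ and $G(\bb)=\ba\bb$. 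Substituting this identity with $w = u_{(2)}$ and combining with (ii) produces $\sum_u (u_{(1)}\diamond v)\bigl[\bd\, u_{(2)}\, \bc + \bd\cdot G(u_{(2)})\bigr]$, which equals $\sum_u (u_{(1)}\diamond v)\, \bd\cdot\pyr(u_{(2)})$ by the definition $\pyr(u_{(2)}) = u_{(2)}\bc + G(u_{(2)})$; adding (i) yields the right-hand side of~(\ref{diamond2}).
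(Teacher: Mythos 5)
Your derivation is correct, but note that the paper itself does not prove Proposition~\ref{diamond_recursion} at all: it imports the statement from the literature, describing it as a reformulation of Theorem~7.1 of the Ehrenborg--Fox paper as carried out in Corollary~2.3.7 of Slone's dissertation. So there is no in-paper proof to match against; what you have done is supply a self-contained deduction of the $\bc\bd$-recursions from the $\ba\bb$-recursions of Proposition~\ref{diamond_recursion_ab}, which is a legitimately different (and more explicit) route than the citation the paper relies on. I checked the substance: the derivation of~\eqref{diamond1} by summing~\eqref{diamond1ab} and~\eqref{diamond2ab} is immediate, and for~\eqref{diamond2} your eight-term expansion of $u\diamond(v\ba\bb)+u\diamond(v\bb\ga)$ groups exactly as you claim into the pieces (i)--(iv). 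The key word identity
\begin{equation*}
\bigl[\ba\bb\ba+\bb\ba\bb\bigr]w+\sum_w\bigl[\ba\bb\,w_{(1)}\,\bb\ba+\bb\ba\,w_{(1)}\,\ba\bb\bigr]w_{(2)}=\ba\bb\,w\,\ba+\bb\ba\,w\,\bb+\bd\cdot G(w)
\end{equation*}
does hold, and the induction you sketch goes through: after cancelling $\bd\cdot G(w)\cdot x$ the step reduces to checking $\ba\bb w\ba x+\bb\ba w\bb x+\ba\bb w\bb\ba+\bb\ba w\ba\bb=\ba\bb wx\ba+\bb\ba wx\bb+\bd\,w\,G(x)$ for $x\in\{\ba,\bb\}$, which is a four-term rearrangement using $G(\ba)=\bb\ba$ and $G(\bb)=\ba\bb$. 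Combined with coassociativity to reindex the double coproduct and with the observation (which you correctly flag) that the coproduct on $\zcd$ is the restriction of the one on $\zab$, this yields $\sum_u(u_{(1)}\diamond v)\cdot\bd\cdot\pyr(u_{(2)})$ as required. Two minor points worth making explicit in a write-up: the identity should be stated for $\ba\bb$-monomials and extended by linearity before substituting $w=u_{(2)}$, since $u_{(2)}$ is in general a polynomial; and you should remark that $\pyr(w)=w\cdot\bc+G(w)$ applied to $u_{(2)}$ lands back in $\zcd$ when $u$ does, so the right-hand side of~\eqref{diamond2} is genuinely a $\bc\bd$-polynomial.
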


\section{Lattice Path Interpretation for $\ba\bb$-monomials}

Before introducing the lattice path interpretation for $\bc\bd$-monomials, we first introduce a similar interpretation for the diamond product of two $\ba\bb$-monomials.  Define the set of lattice paths $\Omega$ as words in the non-commutative letters $\D$, $\R$, and $\U$, where $\D$ is degree 2, and $\R$ and $\U$ are each degree 1.  The letters correspond to the lattice path steps as follows 
$$\text{Right}: \R=(1,0), \text{ Up}: \U=(0,1), \text{ and Diagonal}: \D=(1,1).$$
Let $\Omega(p,q)$ be the set of lattice paths using only these 3 steps from $(0,0)$ to $(p,q)$ which do not contain $\U\R$ as a contiguous subword, that is, as a factor.  

For a given pair of $\ba\bb$-monomials $u$ and $v$ with degrees $p$ and $q$, respectively, consider lattice paths in $\Omega(p,q)$ in which the axes are labeled by the words $u$ and $v$, as shown by the example in Figure~\ref{ex_ab_path}.  We now define a weight function for such paths based on this labeling.
\begin{figure}
\centering
\includegraphics[scale=.35]{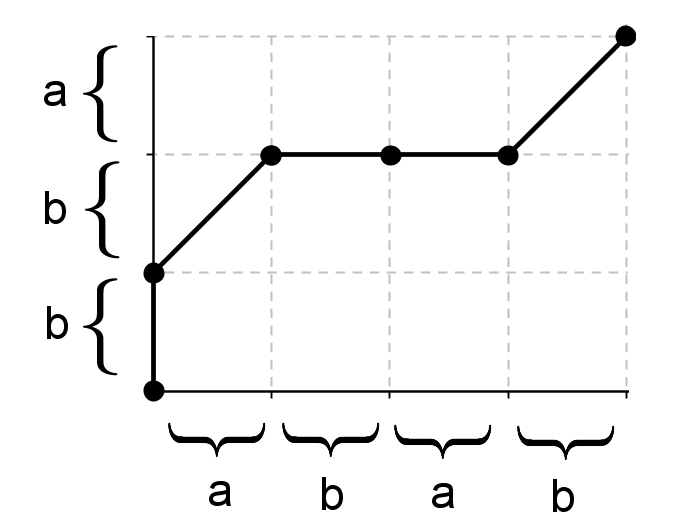}
\caption{: The lattice path $\U\D\R\R\D \in \Omega(4,3)$ labeled by the words $u=\ba\bb\ba\bb$ and $v=\bb\bb\ba$}
\label{ex_ab_path}
\end{figure}

\begin{definition}
For $p'\leq p$ and $q'\leq q$, define $\wt_{u,v}: \Omega(p',q') \longrightarrow \zab$ to be the multiplicative map, taking concatenation to be the product, determined by the following rules:
\begin{align*}
  \wt_{u,v}(\R)&=\begin{cases}
   \ba        & \text{if above an }\ba\text{ label}\\
   \bb		& \text{if above a }\bb\text{ label},
   \end{cases}\\
  \wt_{u,v}(\U)&= \begin{cases}
   \ba        & \text{if to the right of an }\ba\text{ label}\\
   \bb		& \text{if to the right of a }\bb\text{ label},
   \end{cases}\\
  \wt_{u,v}(\D)&= \begin{cases}
   \ba\cdot\bb        & \text{if to the right of an }\ba\text{ label}\\
   \bb\cdot\ba		& \text{if to the right of a }\bb\text{ label}.
   \end{cases} 
  \end{align*}
\end{definition}

For the example path in Figure~\ref{ex_ab_path}, we have $\wt_{\ba\bb\ba\bb,\bb\bb\ba}(\U\D\R\R\D)=\bb\bb\ba\bb\ba\ba\bb$. 

For a given $\ba\bb$-monomial~$u$ of degree $p$, define $\tau(u)\in \Omega(p,0)$ as the word $\tau(u)=\R^{\deg(u)}$.  Now that we have notation for creating horizontal paths, we give the interpretation for the diamond product of two $\ba\bb$-monomials as a sum of weighted lattice paths.

\begin{theorem}
For any two $\ba\bb$-monomials $u$ and $v$ of degree $p$ and $q$, respectively, the $\ba\bb$-polynomial $u\diamond v$ is given by the sum
$$u\diamond v=\sum_{P\in \Omega(p,q)} \wt_{u,v}(P).$$
\label{lattice_path__ab_thm}
\end{theorem}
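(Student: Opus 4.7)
The plan is to proceed by induction on $q = \deg(v)$, using the recursions in Proposition~\ref{diamond_recursion_ab}. For the base case $q = 0$ we have $v = 1$, the corollary gives $u \diamond 1 = u$, and $\Omega(p, 0)$ contains only the path $\R^p$, whose weight under $\wt_{u, 1}$ reads off $u_1 u_2 \cdots u_p = u$ from the horizontal labels.

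For the inductive step, write $v = v' \cdot x$ with $x \in \{\ba, \bb\}$ and $\deg(v') = q - 1$, and let $\bar{x}$ denote the other letter. By \eqref{diamond1ab} or \eqref{diamond2ab} it suffices to show that
$$\sum_{P \in \Omega(p, q)} \wt_{u, v}(P) = (u \diamond v') \cdot x + \sum_u (u_{(1)} \diamond v') \cdot x \bar{x} \cdot u_{(2)}.$$
The core of the argument is a decomposition of $\Omega(p, q)$ based on the last step of $P$ that raises the $y$-coordinate. Since only $\U$ and $\D$ raise $y$, this last non-$\R$ step must terminate at row $q$, after which only $\R$ steps can appear.

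Two cases then arise. If the last non-$\R$ step is a $\U$, then because $\U\R$ is forbidden this $\U$ must be the terminal step of $P$, so such paths are in bijection with $\Omega(p, q - 1)$; the $\U$ contributes weight $x$ (the label of row $q$), and by the inductive hypothesis the sum over this case equals $(u \diamond v') \cdot x$. If the last non-$\R$ step is a $\D$ terminating at column $j \in \{1, \ldots, p\}$, then the prefix preceding this $\D$ lies in $\Omega(j - 1, q - 1)$ with horizontal labels $u_1 \cdots u_{j-1}$, the $\D$ contributes $x \bar{x}$, and the trailing run $\R^{p - j}$ contributes $u_{j+1} \cdots u_p$. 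Summing over $j$ reproduces exactly the coproduct $\Delta(u) = \sum_{i=1}^{p} u_1 \cdots u_{i-1} \otimes u_{i+1} \cdots u_p$, and the induction hypothesis converts each prefix sum into $u_{(1)} \diamond v'$, yielding the second term.

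The main obstacle is verifying exhaustiveness and disjointness of the two-case decomposition, where the no-$\U\R$ constraint plays an essential and subtle role: it is what collapses the first case to a single trailing $\U$ step (by ruling out any $\R$ after a $\U$) and is precisely the condition that aligns the lattice path sum termwise with the Sweedler-indexed recursion. Once this combinatorial dictionary is confirmed, both recursions \eqref{diamond1ab} and \eqref{diamond2ab} are handled uniformly by the parametrization $v = v' \cdot x$, and the induction closes the proof.
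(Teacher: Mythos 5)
Your proposal is correct and follows essentially the same route as the paper's proof: induction on $\deg(v)$ with the same decomposition of $\Omega(p,q)$ into paths ending in a $\U$ step and paths ending in a $\D$ step followed by a horizontal run, matched term by term against the recursions of Proposition~\ref{diamond_recursion_ab}. The only cosmetic difference is that you treat the cases $v = v'\cdot\ba$ and $v = v'\cdot\bb$ uniformly via $x$ and $\bar{x}$, where the paper writes out the $\ba$ case and observes that the $\bb$ case is identical.
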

\begin{proof}
To keep the notation simpler, we will leave out the dependency on $u$ and $v$ of the weight function.
The proof of this theorem is by induction on the degree $q$ of the monomial $v$.  For the base case, we assume that $q$ is 0, making $v=1$, and that the degree $p$ of $u$ is any non-negative integer.  The diamond product $u\diamond 1$ is $u$, and the only lattice path in $\Omega(p,0)$ is the horizontal path $\tau(u)$ of length~$p$.  The weight of this path $\tau(u)$ is $\wt(\tau(u))=u$ since it is only $\R$ steps along the labels of $u$; thus, the base case of the theorem is true.

Suppose the statement is true for any two words of degree $p'$ and $q'$ where $p'\leq p$ and $q'<q$.  We first assume that the last letter of $v$ is $\ba$, or $v=w\cdot \ba$.  According to Equation~\eqref{diamond1ab}, we have 
$$u\diamond (w\cdot \ba) =(u\diamond w)\cdot \ba + \sum_u (u_{(1)} \diamond w)\cdot \ba\cdot\bb \cdot u_{(2)}.$$

By induction, the first term is
\begin{equation}
(u\diamond w)\cdot \ba = \sum_{P\in \Omega(p,q-1)} \wt(P\cdot \U).
\label{ab_a}
\end{equation}
Since the final $\U$ step is to the right of an $\ba$ label, $\ba$ is the correct weight for this step.

For the terms that result from the coproduct, we observe that the cases of $u$ being broken apart by the coproduct at either an $\ba$ or a $\bb$ are identical.  We assume that either $u=y\cdot \ba \cdot z$ or $u=y\cdot \bb\cdot z$ where $y$ is of degree $i$.  Hence, we have $u_{(1)}\otimes u_{(2)}=y\otimes z$ in each case since $\Delta(\ba)=\Delta(\bb)=1\otimes 1$.  This gives the term
\begin{equation}
(y\diamond w)\cdot \ba\cdot\bb\cdot z=\sum_{P\in \Omega(i,q-1)} \wt(P\cdot \D\cdot \tau(z)).
\label{ab_b}
\end{equation}
Notice that the weight of a $\D$ step does not depend on the label below that step, rather it only depends on the label on the vertical axis.  Since this $\D$ step is to the right of the $\ba$ label that ends the word $v$, its weight is $\ba\cdot\bb$, which matches the left side of the equation.

Since we only consider lattice paths without consecutive $\U\R$ steps, every lattice path in $\Omega(p,q)$ must end in a $\U$ step or end in a $\D$ step followed by a horizontal path.  The paths contained within equation~\eqref{ab_a} correspond to the paths ending in $\U$, and the remaining possible paths with the $\D$ step are found in equation~\eqref{ab_b}.  Thus $\Omega(p,q)$ decomposes into a disjoint union of lattice paths as
\begin{align*}
\Omega(p,q)=&\{P\cdot \U: P\in \Omega(p,q-1)\}\\
&\dot\cup \{P\cdot \D\cdot \tau(z): P\in \Omega(i,q-1), u=y\cdot \ba\cdot z \text{ or } u=y\cdot\bb\cdot z\},
\end{align*}
completing the proof if $v$ ends with the letter $\ba$.

If we instead assume that $v=w\cdot \bb$, then Equation~\eqref{diamond2ab} gives us
$$u\diamond (w\cdot \bb)=(u\diamond w)\cdot \bb+\sum_u (u_{(1)}\diamond w)\cdot \bb\cdot\ba \cdot u_{(2)}.$$
This second situation follows nearly identically to the first case from this point.  This is because the lattice paths ending in $\U$ would have $\bb$ as the weight for this final step since it would be to the right of a $\bb$ label.  Additionally, the $\D$ step in lattices paths ending in a $\D$ step followed by a horizontal path will contribute a weight of $\bb\cdot\ba$ since this step will also be to the right of the final $\bb$ label.  This second case concludes the proof of the theorem.

\end{proof}

\section{Lattice Path Interpretation for $\bc\bd$-monomials}

To try to give a better understanding of the recursive formulas given in~\eqref{diamond1} and~\eqref{diamond2} that Ehrenborg and Fox developed for the diamond product of two $\bc\bd$-polynomials, Slone examined in \cite{Slone} the specific case of the diamond product of the form $\bc^p\diamond \bc^q$.  He was able to interpret the coefficients of the resulting $\bc\bd$-polynomial using weighted lattice paths.  

Concentrating on the diamond product of powers of $\bc$, or $\bc^p\diamond\bc^q$, Slone defined the set of lattice paths $\Lambda$ as words in the non-commutative letters $\D$, $\R$, and $\U$, in which $\D$ has degree 2 whereas $\R$ and $\U$ both have degree 1.  As defined in the $\ba\bb$-index case, these letters correspond to lattice path steps as follows

$$\text{Right}: \R=(1,0), \text{ Up}: \U=(0,1), \text{ and Diagonal}: \D=(1,1).$$

Let $\Lambda(p,q)$ be the set of lattice paths using only these 3 steps from $(0,0)$ to $(p,q)$ which do not contain $\U\R$ as a contiguous subword.  Note that labeling the axes, as was done in the $\ba\bb$-index case, is not necessary here since each letter in the $\bc\bd$-monomials is a $\bc$. Define $\wt: \Lambda(p,q) \longrightarrow \zcd$ to be the multiplicative map, taking concatenation to be the product, determined by $\wt(\D)=2\bd$ and $\wt(\R)=\wt(\U)=\bc.$  The main result of Slone's work on the diamond product is the following statement, which is Proposition 2.4.2 in \cite{Slone}.

\begin{proposition}[Slone]
For any non-negative integers $p$ and $q$, the $\bc\bd$-polynomial $\bc^p \diamond \bc^q$ is given by the~sum
$$\bc^p \diamond \bc^q=\sum_{P\in \Lambda(p,q)} \wt(P).$$
\end{proposition}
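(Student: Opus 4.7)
The plan is to deduce this proposition directly from Theorem~\ref{lattice_path__ab_thm}, using the identity $\bc = \ba + \bb$ together with the bilinearity of $\diamond$. Writing $\bc^p = \sum_{u} u$ and $\bc^q = \sum_{v} v$, where the sums range over all $\ba\bb$-monomials of the indicated degrees, and noting that the set $\Lambda(p,q)$ coincides with $\Omega(p,q)$, Theorem~\ref{lattice_path__ab_thm} gives
\[
\bc^p \diamond \bc^q \;=\; \sum_{u,v} u\diamond v \;=\; \sum_{u,v}\sum_{P\in\Omega(p,q)} \wt_{u,v}(P) \;=\; \sum_{P\in\Lambda(p,q)} \Bigl(\sum_{u,v} \wt_{u,v}(P)\Bigr).
\]
The task therefore reduces to the pointwise identity $\sum_{u,v}\wt_{u,v}(P) = \wt(P)$ for each fixed path $P \in \Lambda(p,q)$.

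For a fixed $P$, every $x$-coordinate $i \in \{1,\dots,p\}$ is used by exactly one step of $P$ (either an $\R$ step or the $x$-component of some $\D$ step), and every $y$-coordinate $j \in \{1,\dots,q\}$ is used by exactly one step. The crucial observation, read off directly from the definition of $\wt_{u,v}$, is that each step weight depends on at most one of the letters $u_i$ or $v_j$: an $\R$ step depends only on $u_i$, an $\U$ step only on $v_j$, and a $\D$ step only on $v_j$ (not on $u_i$). Because these dependencies are disjoint across distinct steps, the sum over $u$ and $v$ distributes over the non-commutative concatenation defining $\wt_{u,v}(P)$, and one may replace each step's factor by the sum over the relevant variable.

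Carrying out this factor-wise sum yields $\sum_{u_i} u_i = \ba+\bb = \bc$ for each $\R$ step, the analogous $\bc$ for each $\U$ step, and, for each $\D$ step at position $(i,j)$, a factor of $2$ from summing over $u_i$ (to which the weight is insensitive) multiplied by $\sum_{v_j}(\ba\bb \text{ or } \bb\ba) = \ba\bb + \bb\ba = \bd$, giving $2\bd$. Concatenating these contributions in the order along $P$ produces exactly Slone's weight $\wt(P)$, which establishes the pointwise identity and completes the proof.

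The main obstacle, really the only delicate point, is justifying that the $u$- and $v$-sums can be pushed past a non-commutative product of step weights. The disjointness of variable dependencies across steps is what makes this work: each letter $u_i$ or $v_j$ appears in one and only one factor of $\wt_{u,v}(P)$, so the sum genuinely splits. An alternative approach would be a direct induction on $q$ using the $\bc\bd$-recursions \eqref{diamond1}--\eqref{diamond2}, but treating the $\pyr$ operator separately makes that path significantly more computational; reducing to the $\ba\bb$-monomial result just proved is by far the cleanest route.
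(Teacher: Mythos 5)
Your argument is correct, and it takes a genuinely different route from the paper. The paper does not prove this proposition at all: it is quoted from Slone's dissertation, where (like the paper's own Theorems~\ref{lattice_path__ab_thm} and~\ref{lattice_path_thm}) the proof proceeds by induction on $q$ using the recursions~\eqref{diamond1} and~\eqref{diamond2}, decomposing $\Lambda(p,q)$ according to whether a path ends in $\U$ or in $\D$ followed by a horizontal run; the paper's general Theorem~\ref{lattice_path_thm} then recovers Slone's statement as the special case $\Gamma(\bc^p,\bc^q)=\Lambda(p,q)$. You instead expand $\bc^p=\sum_u u$ and $\bc^q=\sum_v v$ over $\ba\bb$-monomials, invoke bilinearity (legitimate, since the paper notes the diamond product extends to $\zab$ and restricts to $\zcd$), and reduce everything to the pointwise identity $\sum_{u,v}\wt_{u,v}(P)=\wt(P)$. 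That identity holds for exactly the reason you give: each letter $u_i$ or $v_j$ governs at most one factor of the concatenation, so the finite sums distribute through the non-commutative product, yielding $\ba+\bb=\bc$ for $\R$ and $\U$ steps and $2(\ba\bb+\bb\ba)=2\bd$ for $\D$ steps, the $2$ coming from the $u_i$ to which the diagonal weight is insensitive. What your approach buys is a conceptual explanation of the coefficient $2$ on diagonal steps and a complete avoidance of the $\pyr$ and $G$ machinery; what the inductive approach buys is that it generalizes to arbitrary $\bc\bd$-monomials (Theorem~\ref{lattice_path_thm}), where no such clean $\ba\bb$-expansion of the weight function is available. The only hypotheses you lean on --- that $\Lambda(p,q)=\Omega(p,q)$ and that each unit of each axis is covered by exactly one step --- are both immediate from the definitions, so there is no gap.
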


Now we extend Slone's interpretation to look beyond the case of $\bc\bd$-monomials consisting of powers of $\bc$ to the diamond product of any two $\bc\bd$-monomials. Define the set of lattice paths $\Gamma$ as words in the noncommutative letters $\R$, $\U$, $\D$, $\RR$, and $\UU$.  We consider $\R$ and $\U$ to be degree $1$, and $\D$, $\RR$, and $\UU$ to be degree $2$.  The letters correspond to the steps

$$\text{Right: } \R=(1,0), \text{ Up: } \U=(0,1), \text{ Diagonal: } \D=(1,1),$$
$$\text{Double Right: } \overline{\bf{R}}=(2,0), \text{ and Double Up: }\overline{\bf{U}}=(0,2).$$
Let $\Gamma(p,q)$ be the set of all lattice paths from the origin to $(p,q)$ using the 5 steps described above and which do not contain consecutive $\U\R$, $\U\RR$, $\UU\R$, or $\UU\,\RR$ steps.

We now restrict this set to a particular subset $\Gamma(u,v)$ given two $\bc\bd$-monomials $u$ and $v$ with the degrees of the monomials being $p$ and $q$, respectively.  This subset within $\Gamma(p,q)$ requires that the word and its corresponding lattice path adhere to the following four rules, where we label the horizontal axis by the word $u$ and likewise label the vertical axis  by $v$, as shown in Figure~\ref{ex_path}.  This is similar to the labels used earlier with $\ba\bb$-monomials except that the $\bd$ label covers two units on the axis.  In the example, we have $u=\bf{ddcc}$ and $v=\bf{cdc}$; hence, the degrees are $p=6$ and $q=4$, with the lattice path $\D\R\RR\D\D\U$ being shown.

\begin{figure}
\centering
\includegraphics[scale=.35]{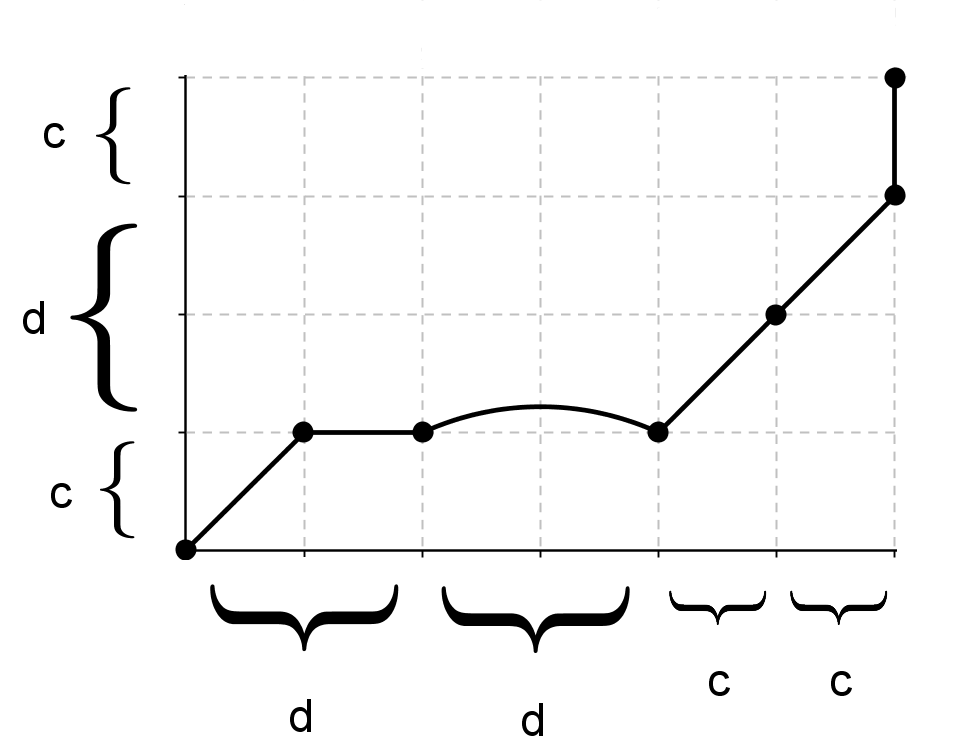}
\caption{: The lattice path $\D\R\RR\D\D\U \in \Gamma(\bd\bd\bc\bc,\bc\bd\bc)$}
\label{ex_path}
\end{figure}

The rules for a word $P\in \Gamma(p,q)$ to be in $\Gamma(u,v)$ are as follows:

\begin{enumerate}
\item No $\U$ step is allowed at the bottom of a $\bd$ label on the vertical axis.

\item Although an $\R$ step is allowed along the first part of a $\bd$ label on the horizontal axis, two consecutive $\R$ steps along such a $\bd$ label are not allowed.

\item A $\UU$ step is only allowed at the bottom of a $\bd$ label on the vertical axis, and similarly, an $\RR$ step is only allowed at the left of a $\bd$ label on the horizontal axis.

\item If a $\D$ step is at the bottom of a $\bd$ label on the vertical axis, then the steps $\D\R$ above a $\bd$ label on the horizontal axis and within the top half of this $\bd$ label on the vertical axis are not allowed.

\end{enumerate}

\begin{example} {\rm One can compute the diamond product of $\bf{cd}$ and $\bf{dc}$ as }
$$\bf{cd}\diamond\bf{dc}=3\bf{cddc}+\bf{ccdcc}+\bf{ccdd}+\bf{cdccc}+2\bf{cdcd}+2\bf{ddcc}+4\bf{dcdc}+2\bf{dccd}+4\bf{ddd}.$$
{\rm There are $13$ lattice paths in $\Gamma(\bf{cd},\bf{dc})$, which are shown in Figure~\ref{example}.  Note that none of the paths begin with $\U$ as required by rule 1 since the word $\bf{dc}$ begins with $\bd$. Additionally, due to rule 4, the path $\D\D\R\U$ is omitted.  The terms of $\bf{cd}\diamond\bf{dc}$ can be obtained from the lattice paths by weighting each $\R$ and $\U$ step by $\bc$, each $\RR$ and $\UU$ step by $\bd$, and each $\D$ step by $\bd$ if it is above a $\bd$ label or by~$2\bd$ if it is above a $\bc$ label, with the exception of making the coefficient $2$ for the lattice path $\R\D\U\D$.  Some of the paths, such as $\D\R\D\U$ and $\D\U\D\R$, give the same term of $\bf{cd}\diamond\bf{dc}$, leading to only $9$ terms from the $13$ lattice paths.  The paths and their corresponding weights are given in Table 1.}
\end{example}

\begin{figure}[t]
  \centering
    \includegraphics[scale=.8]{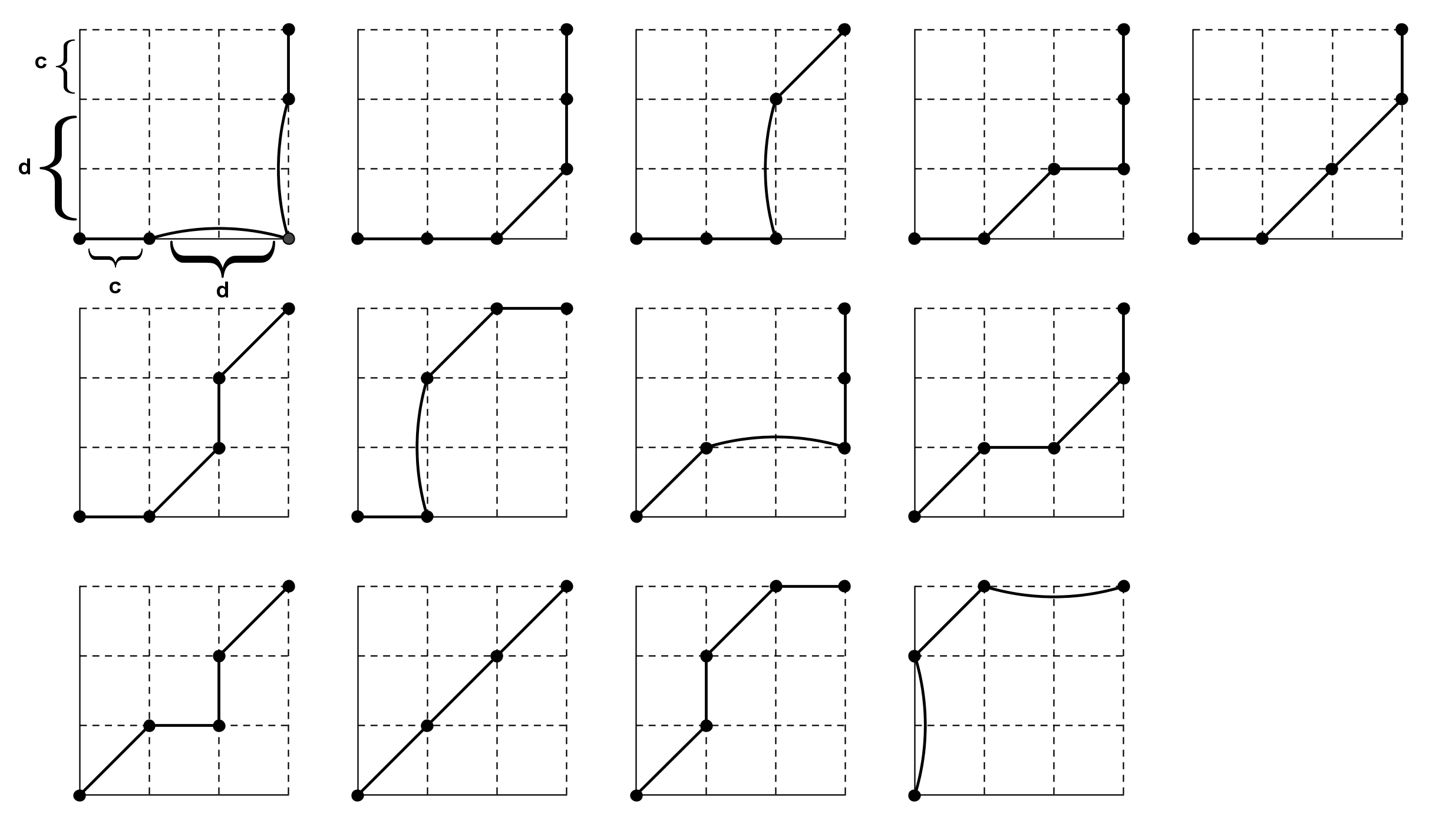}
  \caption{: The lattice paths in $\Gamma(\bc\bd,\bd\bc)$}
  \label{example}
\end{figure}
\begin{table}[h]
\label{table:table 1}
\centering
{\renewcommand{\arraystretch}{1.2}}
\begin{tabular}{c | c c c c c }

Path & $\R\RR\,\UU\U$ & $\R\R\D\U\U$ & $\R\R\UU\D$ & $\R\D\R\U\U$ & $\R\D\D\U$ \\ \hline
Weight & $\bc\bd\bd\bc$ & $\bc\bc\bd\bc\bc$ & $\bc\bc\bd\bd$ & $\bc\bd\bc\bc\bc$ & $\bc\bd\bd\bc$ \vspace{.5cm} \\  

Path &  $\R\D\U\D$ &  $\R\UU\D\R$ &  $\D\RR\U\U$ &  $\D\R\D\U$ & \\ \hline
Weight & $2\bc\bd\bc\bd$ & $\bc\bd\bd\bc$ & $2\bd\bd\bc\bc$ & $2\bd\bc\bd\bc$  \vspace{.5cm} \\  

Path & $\D\R\U\D$ & $\D\D\D$ &  $\D\U\D\R$ & $\UU\D\RR$ & \\ \hline
Weight & $2\bd\bc\bc\bd$ & $2\bd\bd\bd$ & $2\bd\bc\bd\bc$ & $2\bd\bd\bd$  \\
\end{tabular}
\caption{: The weights of the lattice paths in $\Gamma(\bc\bd,\bd\bc)$}
\end{table}

\bigskip 

The following definition gives the method of weighting the steps of the lattice paths in $\Gamma(u,v)$ for generic words $u$ and $v$ to obtain the $\bf{cd}$-index of the diamond product, albeit the choice of coefficient for weight of the $\D$ steps becomes complicated, explaining the need for the exception in the previous example.

\begin{definition}
For $u'$ an initial subword of $u$, that is, $u$ can be factored as $u=u'\cdot u''$, and $v'$ an initial subword of $v$, define $\wt_{u,v}:\Gamma(u',v') \longrightarrow \zcd$ to be the multiplicative map determined by

$$
\wt_{u,v}(\R)=\wt_{u,v}(\U)=\bc, \hspace{.5cm}
\wt_{u,v}(\RR)=\wt_{u,v}(\UU)=\bd, \hspace{.5cm}
\wt_{u,v}(\D)=k\bd,
$$
where depending on the location of a diagonal step $\D$, the scalar $k$ is given by

 $k =
  \begin{cases}
   2        & \text{if above a }\bc\text{ label and to the right of either a }\bc\text{ label or the bottom of a }\bd\text{ label}\\
   2		& \text{if above the first part of a }\bd\text{ label, to the right of a }\bc\text{ label, and followed by a }\U\text{ step,}\\
   			& \text{a }\UU\text{ step, or a }\D\text{ step}\\
   2		&  \text{if above the first part of a }\bd\text{ label, to the right of the bottom of a }\bd\text{ label, }\\
   			&  \text{and followed by a }\U\text{ step}\\  
   1        & \text{otherwise}.
  \end{cases}$
\end{definition} 
\noindent Note that this weight function matches Slone's weight function when we restrict our view to lattice paths in $\Gamma(\bc^p,\bc^q)=\Lambda(p,q)$, because the coefficient of a $\D$ step will always be 2 in this situation.

With the weight function being formally defined, we can now state the main result, but we first define a map to create horizontal paths that will be useful in its proof, as was done with the map $\tau$ in the $\ba\bb$-index case. Define $\pi$ such that for a given $\bc\bd$-monomial~$u$, $\pi(u)$ is the word in $\Gamma(u,1)$ resulting from replacing each $\bc$ in $u$ with the step $\R$ and each $\bd$ with the step~$\RR$.  
This map will be important in the proof of Theorem~\ref{lattice_path_thm} since rules $2$ and $3$ imply that $\pi(u)$ is the only valid horizontal path along a portion of the horizontal axis labeled by $u$.

\begin{theorem}
For any two $\bc\bd$-monomials $u$ and $v$, the $\bc\bd$-polynomial $u\diamond v$ is given by the sum
$$u\diamond v=\sum_{P\in \Gamma(u,v)} \wt_{u,v}(P).$$
\label{lattice_path_thm}
\end{theorem}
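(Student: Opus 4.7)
The plan is to prove the identity by induction on $q = \deg(v)$, following the skeleton of the proof of Theorem~\ref{lattice_path__ab_thm} but using the $\bc\bd$-recursions of Proposition~\ref{diamond_recursion}, namely~\eqref{diamond1} and~\eqref{diamond2}. The base case $q=0$ is immediate: $v=1$ and $u\diamond 1 = u$, while rules~2 and~3 together force $\pi(u)$ to be the only path in $\Gamma(u,1)$, and its weight equals $u$ since each $\R$ contributes $\bc$ and each $\RR$ contributes $\bd$.

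For the inductive step with $v = w\cdot \bc$, I would apply~\eqref{diamond1} and decompose $\Gamma(u, w\cdot \bc)$ by the step first reaching the top row at height $q$. A terminal $\U$-step above the top $\bc$-label contributes weight $\bc$ and reproduces $(u\diamond w)\cdot \bc$ by the inductive hypothesis. The remaining paths end with a $\D$-step entering the top row followed by the unique horizontal continuation $\pi(u_{(2)})$; summing over the position of this $\D$ reproduces the coproduct sum. The coefficient $k$ on the $\D$-step matches the coproduct multiplicities precisely, with $k=2$ when the $\D$ sits above a $\bc$-label (matching $\Delta(\bc) = 2\cdot 1 \otimes 1$) and two instances of $k=1$ when the $\D$ sits above the first or second half of a $\bd$-label (matching the two summands of $\Delta(\bd) = 1\otimes \bc + \bc\otimes 1$: the first-half case produces an additional leading $\R$ from $\pi(\bc\cdot u_{i+1}\cdots u_n)$, while the second-half case absorbs the extra $\bc$ into $u_{(1)}$).

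The case $v = w\cdot \bd$ is the heart of the argument and uses~\eqref{diamond2}. I would classify paths in $\Gamma(u, w\cdot \bd)$ by how they cross the top $\bd$-label at heights $q-2$ through $q$. Paths whose final step is a single $\UU$ give $(u \diamond w)\cdot \bd$. Every other path must visit the middle height $q-1$, and rules~1 and~3 together force the entry into $q-1$ to be through a unique ``ingress'' $\D$-step from the bottom of the top $\bd$-label. Splitting the path at this ingress $\D$ and computing the weight of the residual from $(x,q-1)$ to $(p,q)$, I would show that summing over residuals yields exactly $\pyr(u_{(2)})$: the $u_{(2)}\cdot \bc$ summand of $\pyr(u_{(2)}) = u_{(2)}\bc + G(u_{(2)})$ corresponds to residuals of the form $\pi(u_{(2)})\cdot \U$ placing the ascent at the very end, whereas the derivation $G$, with $G(\bc)=\bd$ and $G(\bd)=\bc\bd$, encodes the internal ascent configurations in the top half (a $\D$ above an internal $\bc$-label, or a $\D\cdot\R$ or $\R\cdot\D$ pair interacting with an internal $\bd$-label).

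The main obstacle will be verifying this last weight identification cleanly. The exceptional values $k=2$ in the definition of $\wt(\D)$, and especially rule~4, are calibrated precisely to prevent the double counting that would otherwise arise from pairing the ingress $\D$ with another $\D$ in the residual above a $\bd$-label on the horizontal axis: a direct computation of residuals above an internal $\bd$ shows that rule~4 removes exactly the configuration $\D\cdot\R$ that would otherwise overproduce a $\bd\bc$ term absent from $\pyr(\bd) = \bd\bc + \bc\bd$. Combining this observation with a careful case analysis on which letter of $u$ is covered by the ingress $\D$ and on the first residual step after it (which determines whether the ingress $\D$ receives $k=1$ or $k=2$), I expect the inductive identity for $v = w\cdot \bd$ to fall out term by term.
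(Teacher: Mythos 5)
Your proposal follows essentially the same route as the paper's proof: induction on $\deg(v)$ via the recursions \eqref{diamond1} and \eqref{diamond2}, decomposing $\Gamma(u,v)$ according to whether the path ends in a $\U$ (resp.\ $\UU$) step or enters the top label through a $\D$ step, matching the coefficient $k$ of that $\D$ to the coproduct multiplicities $\Delta(\bc)=2\cdot 1\otimes 1$ and $\Delta(\bd)=1\otimes\bc+\bc\otimes 1$, and identifying the residual paths above the ingress $\D$ with $\pyr(u_{(2)})=u_{(2)}\cdot\bc+G(u_{(2)})$, with rule~4 eliminating the $\D\R$ configuration that would duplicate the $\bd\bc$ term. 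The verifications you defer to the end (the $k=1$ versus $k=2$ calibration for a $\D$ over the first half of a $\bd$ label, and the invariance of weights when a $\bc$ label is reinterpreted as half of a $\bd$ label) are precisely the checks the paper carries out, so the plan is sound.
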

\begin{proof}

Again to simplify notation, the dependency of the weight function on the words $u$ and $v$ will be omitted. We will prove this result using induction on the degree $q$ of $v$.  For the base case when $q=0$ and the degree of $u$ is any nonnegative integer $p$, we have that $v=1$.  The diamond product $u \diamond 1$ is simply $u$, and the only lattice path in $\Gamma(u,1)$ is $\pi(u)$, the horizontal path along the labels from $u$.  The fact that $\wt(\pi(u))=u$ shows that the base case is true.  

Suppose the statement is true for any two words of degree $p'$ and $q'$ where $p'\leq p$ and $q'<q$.  We will break up the proof for $u\diamond v$ according to the final letter of $v$.

\textbf{Case 1}: Assume $v=w\cdot \bc$.  Due to equation~\eqref{diamond1}, we have
$$u\diamond (w\cdot \bc)=(u\diamond w)\cdot \bc+\sum_u (u_{(1)}\diamond w)\cdot \bd\cdot u_{(2)}.$$

By induction, the first term is
\begin{equation}
(u\diamond w)\cdot \bc=\sum_{P\in \Gamma(u,w)} \wt(P\cdot \U).
\label{a}
\end{equation}
An illustration of the lattice paths in equation~\eqref{a} as well as the next equation can be seen in Figure~\ref{proof_example1}.

For the remaining terms that result from the coproduct, we must separately examine the cases of~$u$ being broken apart by the coproduct at either a $\bc$ or $\bd$.  If broken up at a $\bc$, we assume $u=y\cdot \bc\cdot z$; thus, $u$ splits such that $u_{(1)}\otimes u_{(2)}=2y\otimes z$.  This gives the term
\begin{equation}
(y\diamond w)\cdot 2\bd\cdot z=\sum_{P\in \Gamma(y,w)} \wt(P\cdot \D\cdot \pi(z)).
\label{b}
\end{equation}
Since the $\D$ step is above the $\bc$ label that is between $y$ and $z$ and to the right of a $\bc$ label at the end of the word $v$, the weight of this step is correctly $2\bd$.

\begin{figure}
\centering
\includegraphics[scale=.25]{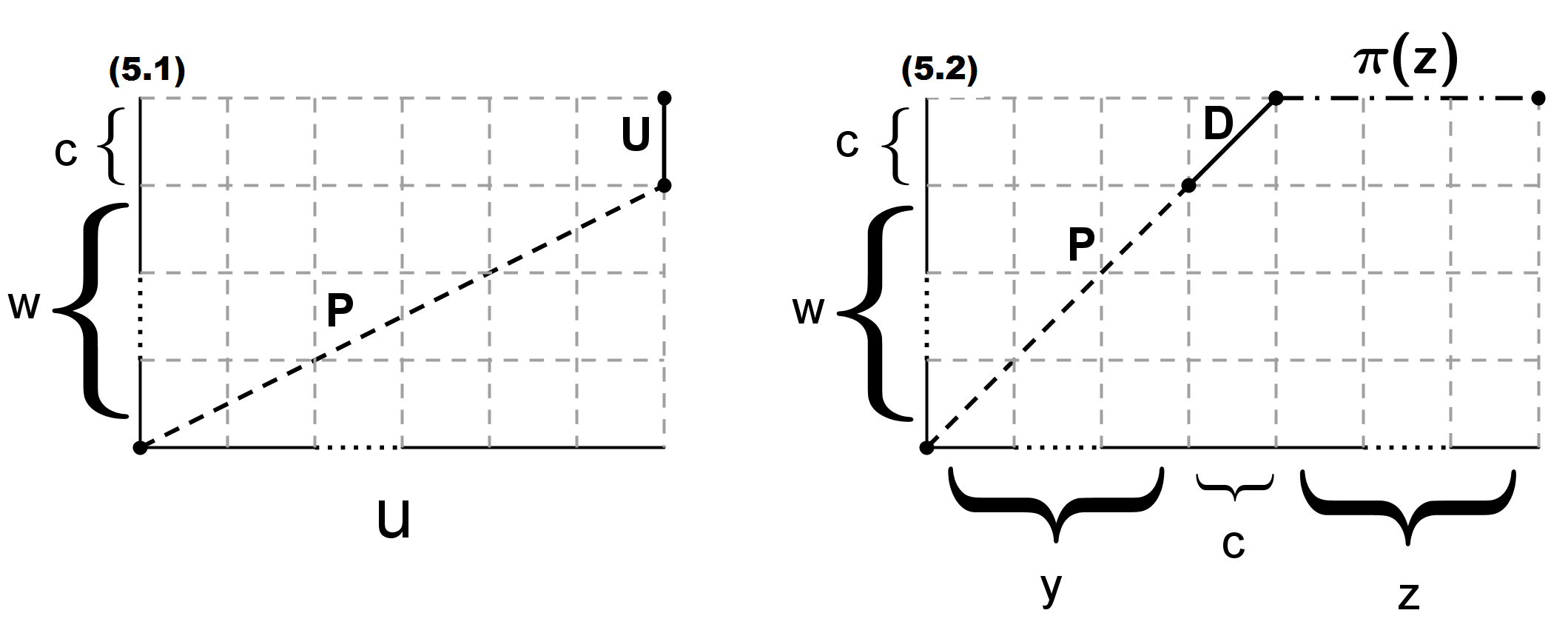}
\caption{: Illustrations of the lattice paths described in the first two subcases of Case 1}
\label{proof_example1}
\end{figure}

If $u$ is instead broken up at a $\bd$, we assume $u=y\cdot \bd\cdot z$; thus, $u$ splits as $y\otimes \bc\cdot z+y\cdot \bc \otimes z$. This leads to two terms, the first of which is
\begin{equation}
(y\diamond w) \cdot \bd\cdot \bc\cdot z=\sum_{P\in \Gamma(y,w)} \wt(P\cdot \D \cdot \R\cdot \pi(z)).
\label{c}
\end{equation}
Although the $\D$ step is above the first part of a $\bd$ label and to the right of a $\bc$ label, 1 is the correct coefficient of the weight of this $\D$ step since it is not followed by a $\U$, $\UU$, or $\D$ step.  The lattice paths described in equation~\eqref{c} and the following equation can be seen in Figure~\ref{proof_example2}.  

The other term we get is
\begin{equation}
(y\cdot \bc\diamond w)\cdot \bd\cdot z=\sum_{P'\in \Gamma(y\cdot \bc,w)}\wt(P')\cdot \bd\cdot z=\sum_{\substack{P\in \Gamma(y\cdot \bd,v)\\ \text{P ends with }\D}}\wt(P\cdot \pi(z)).
\label{d}
\end{equation}
First, note that the $\D$ step that is appended to $P'$ to create $P$ has the correct coefficient of $1$ since it is above the second half of a $\bd$ label.  As we switch labels from $y\cdot \bc$ to $y\cdot \bd$, it is important to notice that the coefficient of a $\D$ step above this $\bc$ label does not change.  The only scenario in which it could change is if it was to the right of the bottom of a $\bd$ label and was not followed by a $\U$ step, but this is impossible because a $\U$ step would be required to move vertically through the top half of the $\bd$ label.  

\begin{figure}
\centering
\includegraphics[scale=.25]{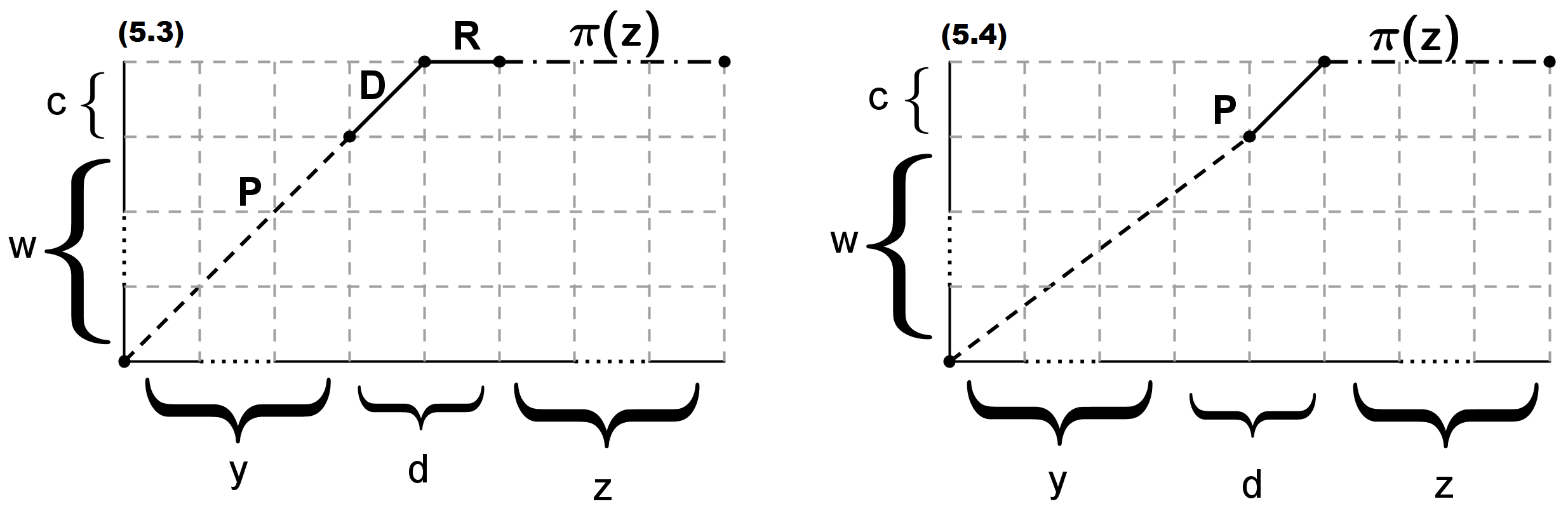}
\caption{: Illustrations of the lattice paths described in the last two subcases of Case 1}
\label{proof_example2}
\end{figure}

To avoid the subwords $\U\R$ and $\U\RR$, every lattice path in $\Gamma(u,w\cdot\bc)$ must either end in a $\U$ step or end in a $\D$ step followed by a horizontal path to the point $(p,q)$.  The paths within the three types of terms resulting from the coproduct cover all possible ways for this $\D$ step to occur, either above a~$\bc$ label or above one of the two parts of a $\bd$ label. Thus $\Gamma(u,w\cdot\bc)$ decomposes as the disjoint union
\begin{align}
\Gamma(u,w\cdot\bc)=\,&\{P\cdot \U: P\in \Gamma(u,w)\} \label{e}\\
&\dot\cup\, \{P\cdot \D\cdot \pi(z): P\in \Gamma(y,w), u=y\cdot \bc\cdot z\}\label{f}\\
&\dot\cup\, \{P\cdot \D\cdot \R\cdot \pi(z): P\in \Gamma(y,w),u=y\cdot \bd\cdot z\} \label{g}\\
&\dot\cup\, \{P\cdot \pi(z): P\in \Gamma(y\cdot \bd,v), P\text{ ends in }\D, u=y\cdot \bd\cdot z\},\label{h}
\end{align}
where the set~\eqref{e} is from equation~\eqref{a},~\eqref{f} from~\eqref{b},~\eqref{g} from~\eqref{c}, and~\eqref{h} from~\eqref{d}.
This concludes the proof for this case.

\textbf{Case 2}: Assume $v=w\cdot \bd$.  By applying equation~\eqref{diamond2}, we have
$$u\diamond(w\cdot \bd)=(u\diamond w)\cdot \bd +\sum_{u} (u_{(1)}\diamond w)\cdot \bd\cdot \pyr(u_{(2)}).$$

The first term, by induction, gives us 
\begin{equation}
(u\diamond w)\cdot \bd=\sum_{P\in \Gamma(u,w)} \wt(P\cdot \UU).
\label{i}
\end{equation}
See an illustration of the lattice paths in equation~\eqref{i} and the next equation in Figure~\ref{proof_example3}.

We once again separate the remaining terms from the coproduct depending on whether $u$ is broken up at a $\bc$ or $\bd$.  If broken up at a $\bc$, we assume $u=y\cdot \bc\cdot z$; hence, $u$ splits into $u_{(1)}\otimes u_{(2)}=2y\otimes z$ as it did in Case 1.  This gives the term 
\begin{equation}
\label{j}
(y\diamond w)\cdot 2\bd\cdot \pyr(z)=\sum_{\substack{P\in \Gamma(y,w), Q\in \Gamma(\bc\cdot z,\bd)\\ Q\text{ begins with }\D}}\wt(P\cdot Q).
\end{equation}
The $2\bd$ is the weight of the $\D$ step that it is above the $\bc$ label since it is to the right of the bottom of a $\bd$ label, so it remains to show that $\pyr(z)$ gives the weights of all of the remainders of the paths $Q$ after the $\D$ step.  Since this step is at the bottom part of a $\bd$ label on the vertical axis, rule 4 causes any path with $\D\R$ along any $\bd$ label to be invalid.  There also cannot be any path with a $\U$ step, except possibly as the final step.  Thus these paths only have horizontal steps with a $\U$ step at the end, or they only have horizontal steps with the exception of one $\D$ step, either above a $\bc$ label or following an $\R$ step along a $\bd$ label.

\begin{figure}
\centering
\includegraphics[scale=.25]{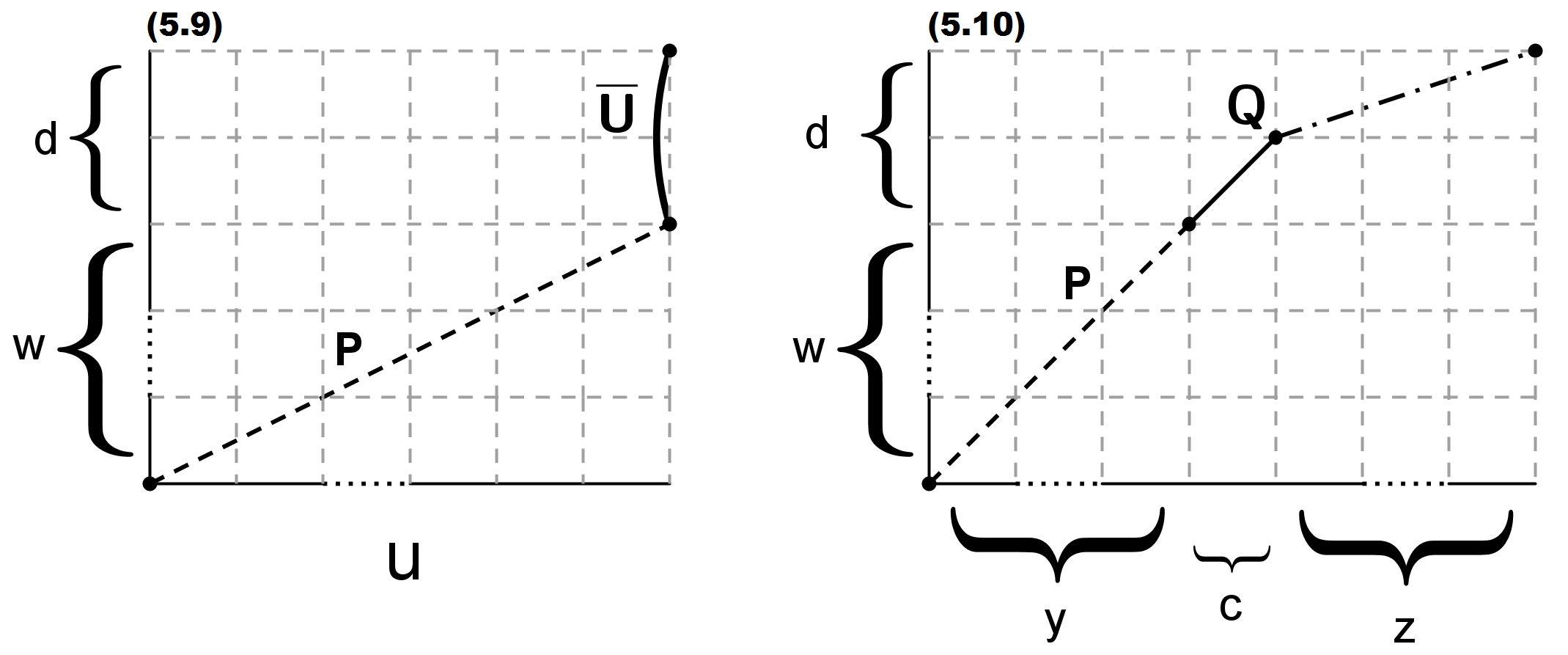}
\caption{: Illustrations of the lattice paths described in the first two subcases of Case 2}
\label{proof_example3}
\end{figure}

Recall that
$$\pyr(z)=z\cdot \bc+G(z).$$
The first term is 
$$z\cdot \bc=\wt(\pi(z)\cdot \U),$$
corresponding to the horizontal path with $\U$ appended to the end.

Since $G$ is a derivation, we apply the product rule to $z=z_1\cdots z_i$ to get $$G(z)= \sum_{j=1}^i z_1\cdots z_{j-1}\cdot G(z_j)\cdot z_{j+1}\cdots z_i.$$
If $z_j=\bc$, we have 
\begin{align*}
z_1\cdots z_{j-1}\cdot G(z_j)\cdot z_{j+1}\cdots z_i&= z_1\cdots z_{j-1} \cdot \bd \cdot z_{j+1}\cdots z_i\\
&= \wt(\pi(z_1\cdots z_{j-1})\cdot \D\cdot\pi(z_{j+1}\cdots z_i)),
\end{align*}
corresponding to the paths where the $\D$ step is above a $\bc$ label.  The weight of this step has coefficient~$1$ since it is along the top half of a $\bd$ label on the vertical axis.
On the other hand, if $z_j=\bd$, we have 
\begin{align*}
z_1\cdots z_{j-1}\cdot G(z_j)\cdot z_{j+1}\cdots z_i&= z_1\cdots z_{j-1} \cdot \bc\cdot\bd \cdot z_{j+1}\cdots z_i\\
&= \wt(\pi(z_1\cdots z_{j-1})\cdot \R\cdot \D\cdot \pi(z_{j+1}\cdots z_i)),
\end{align*}
corresponding to the paths with $\R\D$ steps above the $\bd$ label, where the coefficient of the weight of the $\D$ step is again $1$ by the same reasoning.  Therefore, $G(z)$ gives the correct paths that combine with the initial $\D$ step to make up the paths $Q$, proving equation~\eqref{j}.

If $u$ is broken up at a $\bd$, we assume $u=y\cdot \bd \cdot z$, and we have that $u$ splits as $y\otimes \bc\cdot z+y\cdot \bc \otimes z$. This gives two terms, the first being 
\begin{equation}
(y\diamond w)\cdot \bd \cdot \pyr(\bc\cdot z)=\sum_{\substack{P\in \Gamma(y,w), Q\in \Gamma(\bd\cdot z,\bd)\\ Q\text{ begins with }\D}}\wt(P\cdot Q).
\label{k}
\end{equation}
The $\bd$ is the correct weight of the first $\D$ step in $Q$ since it cannot be followed by a $\U$ step.  Otherwise, the path would be invalid since it would have $\U\R$ or $\U\RR$ as a subword.  $\pyr(\bc\cdot z)$ gives the weights of the remainders of the paths $Q$ due to an argument analogous to the one used in the previous subcase, because treating the second half of the $\bd$ label on the horizontal axis as a $\bc$ label does not change any of the weights of these paths.  Illustrations of the lattice paths in equation~\eqref{k} and the following equation can be found in Figure~\ref{proof_example4}.

\begin{figure}
\centering
\includegraphics[scale=.25]{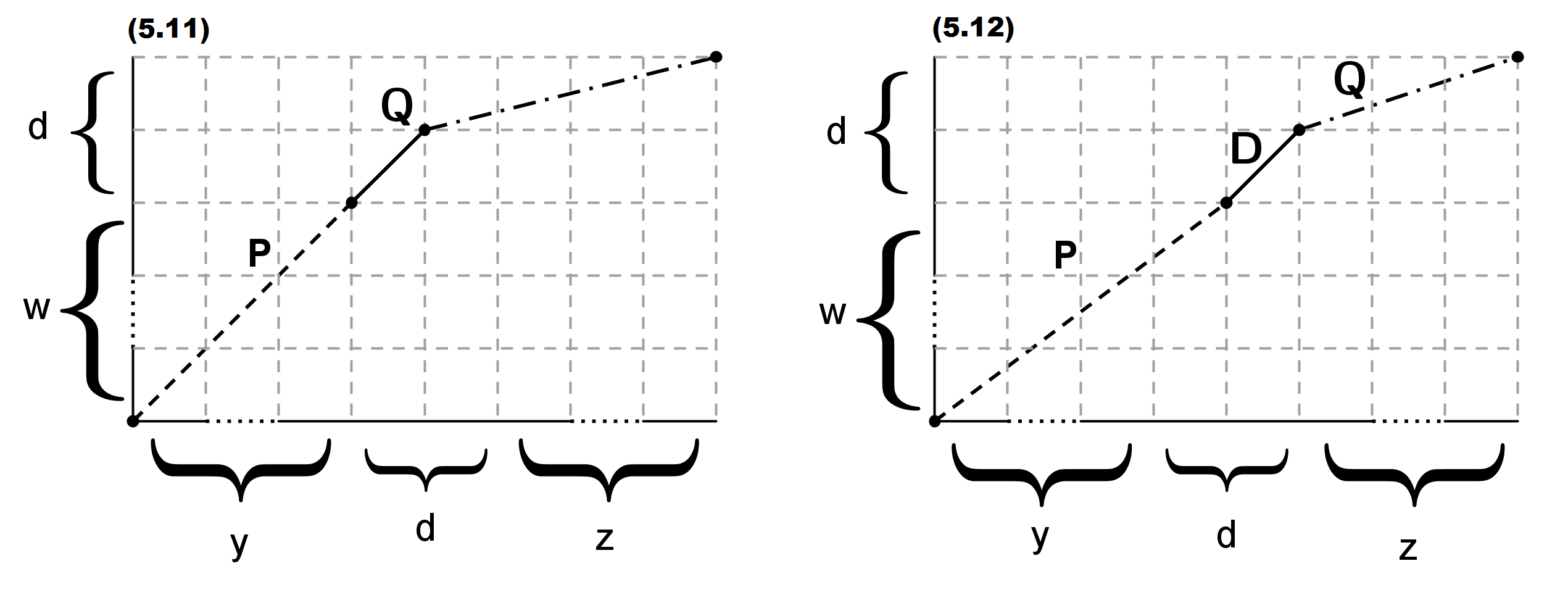}
\caption{: Illustrations of the lattice paths described in the last two subcases of Case 2}
\label{proof_example4}
\end{figure}

The second term from this situation is 
\begin{align}
(y\cdot \bc \diamond w)\cdot \bd\cdot \pyr(z)&=\left(\sum_{P'\in \Gamma(y\cdot\bc,w)} \wt(P')\right) \cdot \bd \cdot \pyr(z) \nonumber\\
&=\sum_{\substack{P\in \Gamma(i+1,q-2), Q\in\Gamma(p-i-2,1)\\ P\cdot \D \cdot Q\in \Gamma(y\cdot\bd\cdot z,w\cdot\bd)}} \wt(P\cdot \D\cdot Q).
\label{l}
\end{align}
Here, we are assuming the degree of $y$ is $i$; hence, the degree of $z$ is $p-i-2$.  Note that the path $P'$ does not have its weight changed as it becomes the path $P$ when the $\bc$ label is switched to become the first half of a $\bd$ label.  This is true since the only possible difference could be the coefficient of a~$\D$ step above the final $\bc$ label.  However, this coefficient will not change since it must be followed by a~$\U$ or $\UU$ step if the $\D$ step is not the final step in $P'$, or it is followed by a $\D$ step if it is the final step in $P'$.  The coefficient of $1$ is correct for the $\D$ step between the paths $P$ and $Q$ since it is above the second part of a $\bd$ label.  Although it is not possible to partition the labels in order to have the correct weights when writing $P$ and $Q$ as elements of $\Gamma(x,x')$ for some $\bc\bd$-monomials $x$~and~$x'$ as was done in the previous cases, it is still clear that the contribution that $Q$ makes to the weight is $\pyr(z)$, similarly to the last two subcases.

The lattice paths in $\Gamma(u,w\cdot \bd)$ must either end in a $\UU$ step, or by rule 1, there must be two $\D$ steps to the right of the last $\bd$ label of $v=w\cdot \bd$ with horizontal paths between and after these steps.  The three types of terms from the coproduct consist of all ways for these $\D$ steps to occur, with the three types being distinguished by whether the first $\D$ step is above a $\bc$ label, the first part of a $\bd$ label, or the second part of a $\bd$ label.  Therefore, $\Gamma(u,w\cdot \bd)$ decomposes as the disjoint union
\vspace{.1cm}\begin{align}
\Gamma(u,w\cdot \bd)&=\,\{P\cdot \overline{\bf{U}} : P\in \Gamma(u,w)\} \label{m}\\
&\dot\cup \,\{P\cdot Q: P\in \Gamma(y,w), Q\in \Gamma(\bc \cdot z, \bd), Q\text{ begins with }\D, u=y\cdot \bc\cdot z\} \label{n}\\
&\dot\cup \, \{P\cdot Q: P\in \Gamma(y,w),Q \in \Gamma(\bd \cdot z,\bd), Q\text{ begins with } \D, u=y\cdot \bd\cdot z\} \label{o}\\
&\dot\cup  \,\{P\cdot \D\cdot Q: P\in \Gamma(i+1,q-2), Q\in \Gamma(p-i-2,1),\label{p}\\
&\hspace{.5cm} P\cdot \D\cdot Q\in \Gamma(y\cdot\bd\cdot z,w\cdot\bd), u=y\cdot \bd\cdot z\}, \nonumber
\end{align}\vspace{.1cm}where the set~\eqref{m} is from equation~\eqref{i},~\eqref{n} from~\eqref{j},~\eqref{o} from~\eqref{k}, and~\eqref{p} from~\eqref{l}.  This decomposition gives us the proof for the case of $v$ ending in a $\bd$, concluding the proof of the theorem.
\end{proof}


\section{Concluding Remarks}
The effect on the $\bc\bd$-index of a second important operation on posets was studied in \cite{Ehrenborg_Fox} and \cite{Ehrenborg_Readdy}.  This operation is the Cartesian product of posets, defined at the beginning of Section 3.  As the diamond product of posets is related to the Cartesian product of polytopes, the Cartesian product of posets is connected to the \textit{free join} of polytopes, defined as follows. If $V$ is an $m$-dimensional polytope  and $W$ is an $n$-dimensional polytope, then embed $V$ and $W$ in $\mathbb{R}^{m+n+1}$ by 
$$V'=\{(x_1,\ldots,x_m,\underbrace{0\ldots,0}_n,0)\in\mathbb{R}^{m+n+1}: (x_1,\ldots x_m)\in V\}$$
and likewise by
$$W'=\{(\underbrace{0,\ldots, 0}_m,x_1,\ldots,x_n,1)\in \mathbb{R}^{m+n+1}:(x_1,\ldots,x_n)\in W\}.$$
Then the free join $V\ovee W$ is the $(m+n+1)$-dimensional polytope defined as the convex hull of~$V'$~and~$W'$.  Kalai \cite{Kalai} observed that the face lattice of the free join of two polytopes is the Cartesian product of the two face lattices, i.e., for two polytopes $V$ and $W$ we have $\mathcal{L}(V\ovee W)=\mathcal{L}(V)\times \mathcal{L}(W)$. Ehrenborg and Readdy \cite{Ehrenborg_Readdy} developed a bilinear operator from $\zab\times \zab$ to $\zab$, called the mixing operator~$M$, in order to study the $\bc\bd$-index of the Cartesian product of posets, or likewise the $\bc\bd$-index of the free join of polytopes.  As with the diamond product operator, Section 6 of \cite{Ehrenborg_Fox} and Section~10 of \cite{Ehrenborg_Readdy} give the definition and recurrences for this operator.  The recurrence is nearly identical to that of the diamond product; however, differing initial conditions cause the degree of $M(u,v)$ to be one higher than the degree of $u\diamond v$.  Is there a similar lattice path interpretation for this product?  Even a good interpretation for the easier cases of $\bc^m \times \bc^n$ or  the Cartesian product of $\ba\bb$-monomials is currently unknown. 

Recently Carl Lee (personal communication) found an equation that relates the free join and Cartesian product of polytopes, while also involving the pyramid and prism operations.  Together with Ehrenborg, the author used a chain counting argument to show it is true for $\bc\bd$-indices of the analogous operations on posets.  It states that for two posets $P$ and $Q$, we have
$$\Psi(P\times Q)=\Psi(\pyr(P)\diamond Q)+\Psi(P\diamond \pyr(Q))-\Psi(\prism(P\diamond Q)).$$
If one could develop lattice path interpretations for the three simpler terms on the right hand side, it would allow us to have an interpretation for the Cartesian product $P\times Q$.

A different approach to studying how flag $f$-vectors
change during poset operations such as the Cartesian product and diamond product is by using quasi-symmetric functions.
The quasi-symmetric function of a poset is
multiplicative with respect to Cartesian product;
see~\cite[Proposition~4.4]{Ehrenborg_Hopf}.
Similarly, the type $B$
quasi-symmetric function of a poset is
multiplicative with respect to the diamond product;
see~\cite[Theorem~13.3]{Ehrenborg_Readdy_Tchebyshev}.  Could this approach be helpful in gaining a better understanding of these product operators?

\section*{Acknowledgements}
The author would like to thank Richard Ehrenborg for reading earlier versions of this paper. The author was partially supported by
National Security Agency grant~H98230-13-1-0280.

\newcommand{\journal}[6]{{#1,} #2, {\it #3} {\bf #4} (#5) #6.}
\newcommand{\dissertation}[4]{{#1,} #2, {\it #3,} (#4)}
\newcommand{\book}[5]{{#1,} {\it #2,} #3, #4.}

\end{document}